\newcommand{\lrn}{\mathsf{Learn}}
\newcommand{\set}{\mathsf{Set}}
\newcommand{\fvect}{\mathsf{FVect}}
\newcommand{\nnet}{\mathsf{NNet}}
\newcommand{\para}{\mathsf{Para}}
\newcommand{\idag}{\mathsf{IDAG}}
\newcommand{\rr}{\mathbb{R}}
\newcommand{\nn}{\mathbb{N}}
\newcommand{\ve}{\varepsilon}
\newcommand{\prl}{\!\parallel\!}
\newcommand{\define}[1]{{\textbf{#1}}}
\newcommand{\id}{\mathrm{id}}
\newcommand{\norm}[1]{\lVert #1\rVert}
\newtheorem*{theorem*}{Theorem}
\newtheorem{definition}{Definition}[section]
\newtheorem{proposition}[definition]{Proposition}   
\newtheorem{theorem}[definition]{Theorem}
\theoremstyle{remark}
\newtheorem{example}[definition]{Example}
\newtheorem*{example*}{Example}
\newtheorem{remark}[definition]{Remark}
\tikzstyle{none}=[inner sep=0pt]
\tikzstyle{ibox}=[draw, rounded corners, minimum width = 30pt, minimum height =
\tikzstyle{update}=[->,>=stealth, very thick,decoration={snake, pre length =3pt, post
\tikzset{
   oriented WD/.style={
      every to/.style={out=0,in=180,draw},
      label/.style={
         font=\everymath\expandafter{\the\everymath\scriptstyle},
         inner sep=0pt,
         node distance=2pt and -2pt},
      semithick,
      node distance=1 and 1,
      decoration={markings, mark=at position .5 with {\arrow{stealth};}},
      ar/.style={postaction={decorate}},
      execute at begin picture={\tikzset{
         x=\bbx, y=\bby,
         every fit/.style={inner xsep=\bbx, inner ysep=\bby}}}
      },
   bbx/.store in=\bbx,
   bbx = 1.5cm,
   bby/.store in=\bby,
   bby = 1.75ex,
   bb port sep/.store in=\bbportsep,
   bb port sep=2,
   bb port length/.store in=\bbportlen,
   bb port length=0pt,
   bb penetrate/.store in=\bbpenetrate,
   bb penetrate=0pt,
   bb min width/.store in=\bbminwidth,
   bb min width=1cm,
   bb rounded corners/.store in=\bbcorners,
   bb rounded corners=5pt,
   bb small/.style={bb port sep=1, bb port length=2.5pt, bbx=.4cm, bb min width=.4cm, bby=.7ex},
   bb/.code 2 args={
      \pgfmathsetlengthmacro{\bbheight}{\bbportsep * (max(#1,#2)) * \bby}
      \pgfkeysalso{draw,minimum height=\bbheight,minimum width=\bbminwidth,outer sep=0pt,
         rounded corners=\bbcorners,thick,
         prefix after command={\pgfextra{\let\fixname\tikzlastnode}},
         append after command={\pgfextra{\draw
            \ifnum #1=0{} \else foreach \i in {1,...,#1} {
               ($($(\fixname.north
	       west)+(0,.9\bbportsep)$)!{\i/(#1+1)}!($(\fixname.south
	       west)-(0,.9\bbportsep)$)$)
	       +(-\bbportlen,0) coordinate (\fixname_in\i) -- +(\bbpenetrate,0) coordinate (\fixname_in\i')}\fi 
            \ifnum #2=0{} \else foreach \i in {1,...,#2} {
               ($($(\fixname.north
	       east)+(0,\bbportsep)$)!{\i/(#2+1)}!($(\fixname.south
	       east)-(0,\bbportsep)$)$) +(-\bbpenetrate,0) coordinate (\fixname_out\i') -- +(\bbportlen,0) coordinate (\fixname_out\i)}\fi;
         }}}
   },
   bb name/.style={append after command={\pgfextra{\node[anchor=north] at
(\fixname.north) {#1};}}},
   ibb port sep/.store in=\ibbportsep,
   ibb port sep=2,
   ibb port length/.store in=\ibbportlen,
   ibb port length=4pt,
   ibb min width/.store in=\ibbminwidth,
   ibb min width=1cm,
   ibb rounded corners/.store in=\ibbcorners,
   ibb rounded corners=1pt,
   ibb small/.style={ibb port sep=1, ibb port length=2.5pt, bbx=.4cm, ibb min width=.4cm, bby=.7ex},
   ibb/.code 2 args={
	   \pgfmathsetlengthmacro{\ibbheight}{\ibbportsep * (max(#1,#2)) * \bby}
	   \pgfkeysalso{draw,color=gray!50,minimum height=\ibbheight,minimum width=\ibbminwidth,outer sep=0pt,
		   rounded corners=\ibbcorners,thick,
		   prefix after command={\pgfextra{\let\fixname\tikzlastnode}},
		   append after command={\pgfextra{\coordinate
			   \ifnum #1=0{} \else foreach \i in {1,...,#1} {
				   ($($(\fixname.north
					west)+(0,.9\ibbportsep)$)!{\i/(#1+1)}!($(\fixname.south
						west)-(0,.9\ibbportsep)$)$)
					   +(-\ibbportlen,0) coordinate (\fixname_in\i) -- +(\ibbportlen,0) coordinate (\fixname_in\i')}\fi 
					   \ifnum #2=0{} \else foreach \i in {1,...,#2} {
						   ($($(\fixname.north
							east)+(0,\ibbportsep)$)!{\i/(#2+1)}!($(\fixname.south
								east)-(0,\ibbportsep)$)$) +(-\ibbportlen,0) coordinate (\fixname_out\i') -- +(\ibbportlen,0) coordinate (\fixname_out\i)}\fi;
		   }}}
   },
   ibb name/.style={append after command={\pgfextra{\node[anchor=north] at
   (\fixname.north) {#1};}}},
   blankbb port sep/.store in=\blankbbportsep,
   blankbb port sep=2,
   blankbb min width/.store in=\blankbbminwidth,
   blankbb min width=1cm,
   blankbb rounded corners/.store in=\blankbbcorners,
   blankbb rounded corners=1pt,
   blankbb small/.style={blankbb port sep=1, blankbb port length=2.5pt, bbx=.4cm, blankbb min width=.4cm, bby=.7ex},
   blankbb/.code 2 args={
	   \pgfmathsetlengthmacro{\blankbbheight}{\blankbbportsep * (max(#1,#2)) * \bby}
	   \pgfkeysalso{draw,color=gray!50,minimum height=\blankbbheight,minimum width=\blankbbminwidth,outer sep=0pt,
		   rounded corners=\blankbbcorners,thick,
		   prefix after command={\pgfextra{\let\fixname\tikzlastnode}},
		   append after command={\pgfextra{\draw
			\ifnum #1=0{} \else foreach \i in {1,...,#1} {
			   ($($(\fixname.north
			   west)+(0,.9\ibbportsep)$)!{\i/(#1+1)}!($(\fixname.south
			   west)-(0,.9\ibbportsep)$)$)
					    coordinate (\fixname_in\i)}\fi 
			\ifnum #2=0{} \else foreach \i in {1,...,#2} {
			   ($($(\fixname.north
			   east)+(0,.9\ibbportsep)$)!{\i/(#2+1)}!($(\fixname.south
			   east)-(0,.9\ibbportsep)$)$) coordinate (\fixname_out\i)}\fi;
		   }}}
   },
   blankbb name/.style={append after command={\pgfextra{\node[anchor=north] at
     (\fixname.north) {#1};}}},
   symbb port sep/.store in=\symbbportsep,
   symbb port sep=2,
   symbb port length/.store in=\symbbportlen,
   symbb port length=0pt,
   symbb min width/.store in=\symbbminwidth,
   symbb min width=1cm,
   symbb rounded corners/.store in=\symbbcorners,
   symbb rounded corners=5pt,
   symbb small/.style={symbb port sep=1, symbb port length=2.5pt, symbbx=.4cm, symbb min width=.4cm, symbby=.7ex},
   symbb/.code 2 args={
      \pgfmathsetlengthmacro{\symbbheight}{\symbbportsep * (max(#1,#2)) * \bby}
      \pgfkeysalso{draw,minimum height=\symbbheight,minimum width=\symbbminwidth,outer sep=0pt,
         rounded corners=\symbbcorners,thick,
         prefix after command={\pgfextra{\let\fixname\tikzlastnode}},
         append after command={\pgfextra{\draw
            \ifnum #1=0{} \else foreach \i in {1,...,#1} {
               ($($(\fixname.north
	       west)+(0,.9\symbbportsep)$)!{\i/(#1+1)}!($(\fixname.south
	       west)-(0,.9\symbbportsep)$)$)
	       +(-\symbbportlen,0) coordinate (\fixname_in\i) -- +(\symbbportlen,0) coordinate (\fixname_in\i')}\fi 
            \ifnum #2=0{} \else foreach \i in {1,...,#2} {
               ($($(\fixname.north
	       east)+(0,.9\symbbportsep)$)!{\i/(#2+1)}!($(\fixname.south
	       east)-(0,.9\symbbportsep)$)$) +(-\symbbportlen,0) coordinate (\fixname_out\i') -- +(\symbbportlen,0) coordinate (\fixname_out\i)}\fi;
         }}}
   },
   symbb name/.style={append after command={\pgfextra{\node[anchor=north] at
(\fixname.north) {#1};}}},
}
\title{Backprop as Functor:\\A compositional perspective on supervised learning}
\author{
  \makebox[.45\linewidth]{Brendan Fong \hspace{1cm} David Spivak} 
  \hspace{.5cm}
  \makebox[.45\linewidth]{R\'emy Tuy\'eras} 
\\
\makebox[.45\linewidth]{Department of Mathematics,}
  \hspace{.5cm}
\makebox[.45\linewidth]{Computer Science and Artificial Intelligence
Lab,}
\\ 
\makebox[.45\linewidth]{Massachusetts Institute of Technology }
  \hspace{.5cm}
\makebox[.45\linewidth]{Massachusetts Institute of Technology }
\thanks{We thank Patrick
Schultz and Amalie Trewartha for useful discussions. Work supported by AFOSR
FA9550-14-1-0031 and FA9550-17-1-0058.}
}
\date{}
\begin{document}
\maketitle

\begin{abstract}
A supervised learning algorithm searches over a set of functions $A \to B$
parametrised by a space $P$ to find the best approximation to some ideal
function $f\colon A \to B$. It does this by taking examples $(a,f(a)) \in
A\times B$, and updating the parameter according to some rule. We define a
category where these update rules may be composed, and show that gradient
descent---with respect to a fixed step size and an error function satisfying a
certain property---defines a monoidal functor from a category of parametrised
functions to this category of update rules. A key contribution is the notion of
request function. This provides a structural perspective on backpropagation,
giving a broad generalisation of neural networks and linking it with structures
from bidirectional programming and open games.
\end{abstract}

\section{Introduction}\label{sec.intro}

Machine learning, and in particular the use of neural networks, has rapidly
become remarkably effective at real world tasks \cite{Nie}. A significant
contributor to this success has been the backpropagation algorithm.
Backpropagation gives a way to compute the derivative of a function via message
passing on a network, significantly speeding up learning. Yet, while the power
of this approach has been impressive, it is also somewhat mysterious. What
structures make backpropagation so effective, and how can we interpret, predict,
and generalise it?

In recent years, monoidal categories have been used to formalise the use of
networks in computation and reasoning---amongst others, applications include
circuit diagrams, Markov processes, quantum computation, and dynamical systems
\cite{Fon,BFP,CK,VSL}. This paper responds to a need for
more structural approaches to machine learning by using categories to provide an
algebraic, compositional perspective on learning algorithms and backpropagation.

Consider a supervised learning algorithm. The goal of a supervised learning
algorithm is to find a suitable approximation to a function $f\colon A \to B$.
To do so, the supervisor provides a list of pairs $(a,b) \in A \times B$, each of 
which is supposed to approximate the values taken by $f$, i.e.\ $b \approx f(a)$.
The supervisor also defines a space of functions over which the learning algorithm
will search. This is formalised by choosing a set $P$ and a function
$I\colon P \times A \to B$. We denote the function at parameter $p \in P$ as
$I(p,-)\colon A \to B$. Then, given a pair $(a,b) \in A \times B$, the learning
algorithm takes a current hypothetical approximation of $f$, say given by
$I(p,-)$, and tries to improve it, returning some new best guess, $I(p',-)$.
In other words, a supervised learning algorithm includes an \emph{update} function $U\colon
P \times A \times B \to P$ for $I$.

\begin{center}
\begin{tikzpicture}
  \node[ibox] (i1) at (0,2.5) {$I(p,-)$};
  \node (a1) at (-2.5,2.5) {$a$};
  \node (b1) at (2.5,2.5) {$b$};
  \node[ibox] (i2) at (0,0) {$I(p',-)$};
  \node (a2) at (-2.5,0) {};
  \node (b2) at (2.5,0) {};
  \draw (a1) -- (i1) -- (b1);
  \draw (a2) -- (i2) -- (b2);
  \draw[update] ($(i1.south)-(0,.2)$) to node[left] {\small \texttt{update}}
  node[right] {\small $(a,b)$} ($(i2.north)+(0,.2)$) ;
  \node[below=.5cm, align=justify, text width=8cm] (cap) at (i2) {\small Figure
  1. Given a training datum $(a,b)$, a learning algorithm updates $p$ to $p'$.};
\end{tikzpicture}
\end{center}

To make this compositional, we ask the following question. 
Suppose we are given two learning algorithms, as described above, one for
approximating functions $A \to B$ and the other for functions $B \to C$. Can we piece them together to make a learning algorithm for approximating
functions $A \to C$? We will see that the answer is no, because something is missing.

To construct a learning algorithm for the composite, we would need a parameterised function $A\to
C$ as well as an update rule. It is easy to take the given parameterised
functions $I\colon P \times A \to B$ and $J\colon Q \times B \to C$ and produce
one from $A$ to $C$. Indeed, take $P\times Q$ as the parameter space and define
the parametrised function $P\times Q\times A\to C;$ $(p,q,a)\mapsto
J(q,I(p,a))$. We call the function $J(-,I(-,-))\colon P \times Q \times A \to C$
the \emph{composite parametrised function}.

The problem comes in defining the update rule for the composite learner.
Our algorithm must take as training data pairs $(a,c)$ in $A\times C$. However, to use
the given update functions, written $U$ and $V$ for updating $I$ and $J$
respectively, we must produce training data
of the form $(a',b')$ in $A\times B$ and $(b'',c'')$ in $B \times C$. It is
straightforward to produce a pair in $B \times C$---take $\big(I(p,a),c\big)$---but there is no natural pair $(a',b')$ to use as training data for $I$. The choice of
$b'$ should encode something about the information in both $c$ and $J$, and nothing of the sort has been specified.

Thus to complete the compositional picture, we must add to our formalism a way
for the second learning algorithm to pass back elements of $B$. We will call
this a \emph{request} function, because it is as though $J$ is telling $I$ what
input $b'$ would have been more helpful. The request function for $J$ will be of
the form $s\colon Q \times B \times C \to B$: given a hypothesis $q$ and
training data $(b'',c'')$, it returns $b'\coloneqq s(q,b'',c'')$. Now we have
the desired training data $(a,b')$ for $I$. The request function is thus a way
of `backpropagating' the output back toward the earlier learners in a network.

\begin{center}
\begin{tikzpicture}
   [dist/.store in=\dist, dist = 5.8,
    dangle/.store in=\dangle, dangle = 1.9,
    font=\small, scale=.8
   ]
  \node[ibox] (i9) at (0,6) {$I(p,-)$};
  \node (a9) at ($(i9)-(\dangle,0)$) {$a$};
  \node[ibox] (j9) at (\dist,6) {$J(q,-)$};
  \node (c9) at ($(j9)+(\dangle,0)$) {$c$};
  \draw (a9) -- (i9) -- (j9) -- (c9);
  \node[ibox] (i0) at (0,4.5) {$I(p,-)$};
  \node (a0) at ($(i0)-(\dangle,0)$) {$a$};
  \node (b0) at ($(i0)+(\dangle,0)$) {};
  \node[ibox] (j0) at (\dist,4.5) {$J(q,-)$};
  \node (c0) at ($(j0)+(\dangle,0)$) {$c$};
  \node (b0') at ($(j0)-(\dangle,0)$) {$I(p,a)$};
  \draw (a0) -- (i0) -- (b0);
  \draw (b0')-- (j0) -- (c0);
  \draw[very thick, bend left=15, >=stealth,->, color=gray]
  (i0.north east) to node[above] {\texttt{implement}} (b0'.north);
  \node[ibox] (i1) at (0,3) {$I(p,-)$};
  \node (a1) at ($(i1)-(\dangle,0)$) {$a$};
  \node (b1) at ($(i1)+(\dangle,0)$) {$s(q,I(p,a),c)$};
  \node[ibox] (j1) at (\dist,3) {$J(q,-)$};
  \node (c1) at ($(j1)+(\dangle,0)$) {$c$};
  \node (b1') at ($(j1)-(\dangle,0)$) {$I(p,a)$};
  \draw (a1) -- (i1) -- (b1);
  \draw (b1')-- (j1) -- (c1);
  \draw[very thick, bend right=15, >=stealth,->, color=gray]
  (j1.north west) to node[above] {\texttt{request}} (b1.north);
  \node[ibox] (i2) at (0,0.5) {$I(p',-)$};
  \node (a2) at ($(i2)-(\dangle,0)$) {};
  \node[ibox] (j2) at (\dist,0.5) {$J(q',-)$};
  \node (c2) at ($(j2)+(\dangle,0)$) {};
  \draw (a2) -- (i2) -- (j2) -- (c2);
  \draw[update] ($(i1.south)-(0,.2)$) to node[left] {\small \texttt{update}}
  node[right] {\small $(a,s(q,I(p,a),c))$} ($(i2.north)+(0,.2)$) ;
  \draw[update] ($(j1.south)-(0,.2)$) to node[left] {\small \texttt{update}}
  node[right] {\small $(I(p,a),c)$} ($(j2.north)+(0,.2)$) ;
  \node[below=.5cm, align=justify, text width=8cm] (cap) at
  ($0.5*(i2)+0.5*(j2)$) {\small Figure 2. A request function allows an update
  function to be defined for the composite $J(q,I(p,-))$.};
\end{tikzpicture}
\end{center}

In this paper we will show that learning becomes \emph{compositional}---i.e.\ we can define
a learning algorithm $A \to C$ from learning algorithms $A \to B$ and $B \to C$---as long as each learning algorithm consists of these four components:
\begin{compactitem}
	\item a parameter space $P$,
	\item an implementation function $I\colon P\times A\to B$,
	\item an update function $U\colon P\times A\times B\to P$, and
	\item a request function $r\colon P\times A\times B\to A$.
\end{compactitem}

More precisely, we will show that learning
algorithms $(P,I,U,r)$ form the morphisms of a category $\lrn$. A category is
an algebraic structure that models composition. More precisely, a
category consists of \emph{types} $A$, $B$, $C$, and so on, \emph{morphisms}
$f\colon A \to B$ between these types, and a \emph{composition rule} by which
morphisms $f\colon A \to B$ and $g\colon B \to C$ can be combined to create a morphism
$A\to C$. Thus we can say that learning algorithms form a category, as we have
informally explained above.
In fact, they have more structure because they can be composed not only in series but
also in parallel, and this too has a clean algebraic description. Namely, we will
show that $\lrn$ has the structure of a symmetric monoidal category.

This novel category $\lrn$, synthesised from the above analysis of learning
algorithms, nonetheless curiously resembles and is closely related to lenses
\cite{BPV} and open games \cite{GHWZ}, two well-known structures that also model
compositional, bidirectional exchange information between interacting
systems. We return to this briefly in \cref{sec.discuss}.

Our aim thus far has been to construct an algebraic description of learning
algorithms, and we claim that the category $\lrn$ suffices. In particular, then,
our framework should be broad enough to capture known methods for constructing
supervised learning algorithms; such learning algorithms should sit inside
$\lrn$ as a particular kind of morphism. Here we study neural networks.

Let us say that a \emph{neural network layer of type $(n_1,n_2)$} is a subset $C
\subseteq [n_1]\times [n_2]$, where $n_1,n_2\in\nn$ are natural numbers, and
$[n] = \{1,\dots,n\}$ for any $n\in\nn$. The numbers $n_1$ and $n_2$ represent
the number of nodes on each side of the layer, $C$ is the set of connections,
and the inclusion $C\subseteq[n_1]\times[n_2]$ encodes the connectivity
information, i.e.\ $(i,j)\in C$ means node $i$ on the right is connected to node $j$ on the left.

If we additionally fix a function $\sigma\colon \rr \to \rr$, which we call
the \emph{activation function}, then a neural network layer defines a
parametrised function $I\colon\rr^{\lvert C\rvert+n_2}\times
\rr^{n_1}\to\rr^{n_2}$. The $\rr^{\lvert C \rvert}$ factor encodes numbers
called \emph{weights} and the $\rr^{n_2}$ factor encodes numbers called the
\emph{biases}. For example, the layer $C=\{(1,1),(2,1),(2,2)\}\subseteq [2]
\times [2]$, has $n_2=2$ biases and $\lvert C\rvert=3$ weights. The biases
are represented by the right hand nodes below, while the weights are
represented by the edges:
\[
\begin{tikzpicture}[font=\tiny]
\node[draw, circle] (i1) {$a_2$};
\node[draw, circle, above=0.5 of i1] (i2) {$a_1$};
\node[draw, circle, right=2 of i1] (o1) {$w_2$};
\node[draw, circle, right=2 of i2] (o2) {$w_1$};
\draw (i1) to[above] node {$w_{22}$} (o1);
\draw (i2) to[above] node {$w_{21}$} (o1);
\draw (i2) to[above] node {$w_{11}$} (o2);
\end{tikzpicture}
\]
This layer defines the parametrised function $I\colon \rr^5
\times \rr^2 \to \rr^2$, given by
\begin{multline*}
I\big(w_{11},w_{21},w_{22},w_1,w_2,a_1,a_2\big) \\ \coloneqq 
\big(\,\sigma(w_{11}a_1+w_1),\; \sigma(w_{21}a_1+w_{22}a_2+w_2)\,\big).
\end{multline*}

A neural network is a sequence of layers of types $(n_0,n_1)$,
$(n_1,n_2)$, $\dots$, $(n_{k-1},n_k)$. By composing the parametrised functions
defined by each layer as above, a neural network itself defines a parametrised
function $P\times\rr^{n_0}\to\rr^{n_k}$ for some $P$. Note that this function is always
differentiable if $\sigma$ is.

To go from a differentiable parametrised function to a learning
algorithm, one typically specifies a suitable error function $e$ and a step size $\ve$,
and then uses an algorithm known as gradient descent. 

Our main theorem is that, under general conditions, gradient descent is
compositional. This is formalised as a functor $\para \to \lrn$,
where $\para$ is a category where morphisms are differentiable
parametrised functions $I\colon P \times \rr^n \to \rr^m$ between finite
dimensional Euclidean spaces, where the parameter space $P=\rr^p$ is also Euclidean. 

In brief, the functoriality means that given two differentiable parametrised
functions $I$ and $J$, we get the same result if we (i) use gradient descent to
get learning algorithms for $I$ and $J$, and then compose those learning
algorithms, or (ii) compose $I$ and $J$ as parametrised functions, and then use
gradient descent to get a learning algorithm. More precisely, we have the
following:

\begin{theorem*}
Fix $\ve>0$ and $e(x,y)\colon \rr \times \rr \to \rr$
such that $\frac{\partial e}{\partial x}(x_0,-)\colon \rr \to \rr$ is invertible
for each $x_0 \in \rr$. Then there is a faithful, injective-on-objects, symmetric monoidal
functor
\[
  L_{\ve,e}\colon\para \longrightarrow \lrn
\]
sending each differentiable parametrised function $I\colon P \times \rr^n \to
\rr^m$ to the learning algorithm $(P,I,U_I,r_I)$ defined by
\[
  U_I(p,a,b) \coloneqq p - \varepsilon\nabla_pE_I(p,a,b)
\]
and
\[
  r_I(p,a,b) \coloneqq f_a\Big(\nabla_a E_I(p,a,b)\Big),
\]
where $E_I(p,a,b) \coloneqq \sum_i e(I(p,a)_i,b_i)$ and $f_a$ denotes the
component-wise application of the inverse to $\frac{\partial e}{\partial
x}(a_i,-)$ for each $i$. 
\end{theorem*}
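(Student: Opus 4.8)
The plan is to establish, in order: that $L_{\ve,e}$ is well defined on morphisms, that it preserves identities and composition, that it is symmetric monoidal, faithful, and injective on objects. Well-definedness is a type check: $\nabla_p E_I(p,a,b)$ lies in $\rr^p\cong P$ and $\nabla_a E_I(p,a,b)$ in $\rr^n$, and $f_a$ is a well-defined map $\rr^n\to\rr^n$ \emph{precisely} because $\frac{\partial e}{\partial x}(x_0,-)$ is invertible for every $x_0$; hence $(P,I,U_I,r_I)$ is a morphism of $\lrn$. For identities, the identity on $\rr^n$ in $\para$ is represented by the trivial-parameter function $\pi\colon\rr^0\times\rr^n\to\rr^n$ with $\pi(*,a)=a$; then $E_\pi(*,a,b)=\sum_i e(a_i,b_i)$, so $\frac{\partial E_\pi}{\partial a_i}(*,a,b)=\frac{\partial e}{\partial x}(a_i,b_i)$ and therefore $r_\pi(*,a,b)_i=\big(\tfrac{\partial e}{\partial x}(a_i,-)\big)^{-1}\!\big(\tfrac{\partial e}{\partial x}(a_i,b_i)\big)=b_i$, which together with the necessarily trivial update is exactly the identity learning algorithm on $\rr^n$.

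The substance of the proof is functoriality for composition. Fix composable morphisms $I\colon P\times\rr^n\to\rr^m$ and $J\colon Q\times\rr^m\to\rr^\ell$ of $\para$, write $b\coloneqq I(p,a)$, and, following the composition rule of $\lrn$, set $b'\coloneqq r_J(q,b,c)=f_b\big(\nabla_b E_J(q,b,c)\big)$. The two composites $L_{\ve,e}(J\circ I)$ and $L_{\ve,e}(J)\circ L_{\ve,e}(I)$ have the same parameter space $P\times Q$ and the same implementation $(p,q,a)\mapsto J(q,I(p,a))$, so it suffices to match update and request functions. Since $E_{J\circ I}(p,q,a,c)=E_J(q,I(p,a),c)$ identically, differentiating in $q$ shows at once that the $Q$-component of the $\lrn$-composite update, $q-\ve\nabla_q E_J(q,b,c)$, equals $q-\ve\nabla_q E_{J\circ I}(p,q,a,c)$. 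For the $P$-component and the request we use the one place where invertibility is essential: since $f_b(v)_j=\big(\tfrac{\partial e}{\partial x}(b_j,-)\big)^{-1}(v_j)$, for each $j$
\[
  \frac{\partial e}{\partial x}\big(b_j,\,b'_j\big)
  \;=\; \frac{\partial e}{\partial x}\Big(b_j,\ \big(\tfrac{\partial e}{\partial x}(b_j,-)\big)^{-1}\!\big(\tfrac{\partial E_J}{\partial b_j}(q,b,c)\big)\Big)
  \;=\; \frac{\partial E_J}{\partial b_j}(q,b,c).
\]
Consequently, holding $b'$ fixed as the composition rule prescribes and applying the chain rule to $E_{J\circ I}=E_J(q,I(p,a),c)$,
\[
  \frac{\partial E_I}{\partial p_k}(p,a,b')
  = \sum_j \frac{\partial e}{\partial x}(b_j,b'_j)\,\frac{\partial b_j}{\partial p_k}
  = \sum_j \frac{\partial E_J}{\partial b_j}(q,b,c)\,\frac{\partial b_j}{\partial p_k}
  = \frac{\partial E_{J\circ I}}{\partial p_k}(p,q,a,c),
\]
and the identical computation with $a_l$ in place of $p_k$ gives $\nabla_a E_I(p,a,b')=\nabla_a E_{J\circ I}(p,q,a,c)$. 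Hence $U_I(p,a,b')=p-\ve\nabla_p E_{J\circ I}$ and $r_I(p,a,b')=f_a\big(\nabla_a E_{J\circ I}\big)$, so the $\lrn$-composite is exactly $L_{\ve,e}(J\circ I)$.

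For the monoidal structure, observe $E_{I\otimes J}=E_I+E_J$ under the evident splitting of coordinates, so both $\nabla$ and the component-wise map $f$ split, yielding $U_{I\otimes J}=U_I\times U_J$ and $r_{I\otimes J}=r_I\times r_J$; this, together with the coherence identifications $\rr^n\times\rr^m\cong\rr^{n+m}$, makes $L_{\ve,e}$ monoidal, and the symmetry of $\para$ is sent to the braiding of $\lrn$ by the same computation used for identities. Faithfulness is immediate because $I$ is literally recorded as the implementation component of $L_{\ve,e}(I)$, so the assignment on morphisms loses no information; and injectivity on objects holds since $\rr^n\mapsto\rr^n$. I expect the only genuine obstacle to be the displayed cancellation, together with the care needed in unwinding the $\lrn$-composition of four-tuples — tracking which intermediate values are held constant and in what order arguments are fed to $U$ and $r$; everything else is type-checking and the chain rule.
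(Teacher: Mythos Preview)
Your proposal is correct and follows essentially the same route as the paper's own proof: the same check on identities, the same observation $E_{J\circ I}(p,q,a,c)=E_J(q,I(p,a),c)$ for the $Q$-component, the same key cancellation $\frac{\partial e}{\partial x}(b_j,b'_j)=\frac{\partial E_J}{\partial b_j}$ coming from the invertibility hypothesis followed by the chain rule for the $P$- and $A$-components, and the same additive splitting $E_{I\otimes J}=E_I+E_J$ for monoidality. Your presentation isolates the cancellation more cleanly than the paper does, but the argument is the same.
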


This theorem has a number of consequences. For now, let us name just three.
The first is that we may train a neural network by using the training data on
the whole network to create training data for each subunit, and then training
each subunit separately. To some extent this is well-known: it is responsible
for speedups due to backpropagation, as one never needs to compute the
derivatives of the function defined by the entire network. However the fact
that this functor is symmetric monoidal shows that we can vary the
backpropagation algorithm to factor the neural network into richer sub-parts
than simply carving it layer by layer.

Second, it gives a sufficient condition---which is both straightforward and
general---under which an error function works well under backpropagation.

Finally, it shows that backpropagation can be applied far more generally than
just to neural networks: it is compositional for all differentiable
parametrised functions. As a consequence, it shows that backpropagation gives a
sound method for computing gradient descent even if we introduce far more
general elements into neural networks than the traditional composites of linear
functions and activation functions.

\subsection*{Overview}
In \cref{sec.learners}, we define the category $\lrn$ of learning algorithms.
We present the main theorem in \cref{sec.functors}: given a
choice of error function and step size, gradient descent and backpropagation
give a functor from the category of parametrised functions to the category of
learning algorithms. In \cref{sec.nets}, we broaden this view to show how it
relates to neural networks. Next, in \cref{sec.bimonoids}, we note that the
category $\lrn$ has additional structure beyond just that of a symmetric
monoidal category: it has bimonoid structures that allow us to split and merge
connections to form networks. We also show this is useful in understanding the
construction of individual neurons, and in weight tying and convolutional neural
nets. We then explicitly compute an example of functoriality from neural nets to
learning algorithms (\textsection\ref{sec.examples}), and discuss implications
for this framework (\textsection\ref{sec.discuss}). The extended version
\cite{FST} of this article provides appendices with more technical aspects of
the proof of the main theorem, and a brief, diagram-driven introduction to
relevant topics in category theory.

\section{The category of learners}\label{sec.learners}
In this section we define a symmetric monoidal category $\lrn$ that
models supervised learning algorithms and their composites.
See extended version \cite{FST} for background on categories and string diagrams.

\begin{definition}
Let $A$ and $B$ be sets. 
A \emph{supervised learning algorithm}, or simply \emph{learner}, $A \to B$ is a tuple
$(P,I,U,r)$ where $P$ is a set, and $I$, $U$, and $r$ are functions of types:
\begin{align*}
I&\colon P \times A \to B, \\
U&\colon P \times A \times B \to P,\\
r&\colon P \times A \times B \to A.
\end{align*}
\end{definition}

We call $P$ the \emph{parameter space}; it is just a set. The map $I$
\emph{implements} a parameter value $p\in P$ as a function $I(p,-)\colon A
\to B$. We think of a pair $(a,b) \in A \times B$ as a \emph{training datum};
it pairs an input $a$ with an output $b$. The map $U\colon P \times A \times
B \to P$ is the \emph{update function}; given a `current' parameter $p$ and a
training datum $(a,b) \in A \times B$, it produces an `updated' parameter
$U(p,a,b) \in P$. This can be thought of as the learning step. The idea is
that the updated function $I(U(p,a,b),-)\colon A \to B$ would hopefully send
$a$ closer to $b$ than the function $I(p,-)$ did, though this is not a
requirement and is certainly not always true in practice. Finally, we have
the \emph{request} function $r\colon P \times A \times B \to A$. This takes
the same datum and produces a `requested value' $r(p,a,b) \in A$. The idea is
that this value will be sent to upstream learners for their own training.

\begin{remark}
The request function is perhaps a little mysterious at this stage. Indeed, it is
superfluous to the definition of a stand-alone learning algorithm: all we need
for learning is a space $P$ of functions $I(p,-)$ to search over, and a rule $U$ for
updating our parameter $p$ in light of new information. As we emphasised in the
introduction, the request function is crucial in \emph{composing} learning
algorithms: there is no composite update rule without the request function.

Another way to understand the role of the request function comes from
experiments in machine learning. Fixing some parameter $p$ and hence a function
$I(p,-)$, the request function allows us to choose a desired output $b$, and
then for any input $a$ return a new input $a':=r(p,a,b)$. In the case of
backpropagation, we will see we then have the intuition that $I(p,a')$ is closer
to $b$ than $I(p,a)$ is. For example, if we are classifying images, and $b$ is the
value indicating the classification `cat', then $a'$ will be a more `cat-like'
version of the image $a$. This is similar in spirit to what has been termed
inversion or `dreaming' in neural nets \cite{MV}.

A third way to understand the request function is by analogy with other
compositional structures: the request function plays an analogous role to the
put function in an asymmetric lens \cite{BPV} and the coplay function in an open
game \cite{GHWZ}.
\end{remark}

\begin{remark}
Using string diagrams\footnote{String diagrams are an alternative, but
nonetheless still formal, syntax for morphisms in a monoidal category. See extended version \cite{FST} for more details.} in $(\set,\times)$, we can draw an
implementation function $I$ as follows: 
\[
\begin{tikzpicture}[oriented WD]
	\node[bb port sep=1, bb={2}{1}]                            (I)     {$I$};
	\node[ibb={2}{1}, fit=(I)]                          (outer) {};
	\node at ($(outer_in1'|-I_in1)-(0.3,0)$) {\footnotesize $P$};
	\node at ($(outer_in2'|-I_in2)-(0.3,0)$) {\footnotesize $A$};
	\node at ($(outer_out1')+(0.3,0)$) {\footnotesize $B$};
	\draw (outer_in1|-I_in1) to (I_in1);
	\draw (outer_in2|-I_in2) to (I_in2);
	\draw (I_out1') to (outer_out1);
\end{tikzpicture}
\]
One can do the same for $U$ and $r$, though we find it convenient to combine them into a single update--request function $(U,r)\colon P \times A \times B \to P
\times A$. This function can be drawn as follows:
\[
\resizebox{.5\textwidth}{!}{
\begin{tikzpicture}[oriented WD, baseline=(current bounding box.center)]
	\node[bb port sep=2, bb={3}{2}]                            (U)     {$U,r$};
	\node[ibb={3}{2}, fit=(U)]                          (outer) {};
	\node at ($(outer_in1'|-U_in1)-(0.3,0)$) {\footnotesize $P$};
	\node at ($(outer_in2'|-U_in2)-(0.3,0)$) {\footnotesize $A$};
	\node at ($(outer_in3'|-U_in3)-(0.3,0)$) {\footnotesize $B$};
	\node at ($(outer_out1'|-U_out1)+(0.3,0)$) {\footnotesize $P$};
	\node at ($(outer_out2'|-U_out2)+(0.3,0)$) {\footnotesize $A$};
	\draw (outer_in1|-U_in1) to (U_in1);
	\draw (outer_in2|-U_in2) to (U_in2);
	\draw (outer_in3|-U_in3) to (U_in3);
	\draw (U_out1') to (outer_out1|-U_out1);
	\draw (U_out2') to (outer_out2|-U_out2);
\end{tikzpicture}
=
\begin{tikzpicture}[oriented WD,baseline=(current  bounding  box.center)]
	\node[bb port sep=1, bb={3}{1}]                            (U)     {$U$};
	\node[bb port sep=1, bb={3}{1},below=1 of U]                            (r)     {$r$};
	\node[ibb={3}{2}, fit=(U) (r)]                          (outer) {};
	\node at ($(outer_in1')-(0.3,0)$) {\footnotesize $P$};
	\node at ($(outer_in2')-(0.3,0)$) {\footnotesize $A$};
	\node at ($(outer_in3')-(0.3,0)$) {\footnotesize $B$};
	\node at ($(outer_out1'|-U_out1)+(0.3,0)$) {\footnotesize $P$};
	\node at ($(outer_out2'|-r_out1)+(0.3,0)$) {\footnotesize $A$};
	\draw (outer_in1) to (U_in1);
	\draw (outer_in2) to (U_in2);
	\draw (outer_in3) to (U_in3);
	\draw (outer_in1) to (r_in1);
	\draw (outer_in2) to (r_in2);
	\draw (outer_in3) to (r_in3);
	\draw (U_out1') to (outer_out1|-U_out1);
	\draw (r_out1) to (outer_out2|-r_out1);
\end{tikzpicture}
}
\]
\end{remark}

Let $(P,I,U,r)$ and $(P',I',U',r')$ be learners of the type $A\to B$. We
consider them to be equivalent if there is a bijection $f\colon P\to P'$ such that the following hold for each $p\in P$, $a\in A$, and $b\in B$:
\begin{align*}
  I'(f(p),a)&= I(p,a),\\
  U'(f(p),a,b) &= f(U(p,a,b)), \\ 
  r'(f(p),a,b) &= r(p,a,b).
\end{align*}
In fact, a stronger notion of equivalence---the equivalence relation generated by the existence of a surjection $f$ with these properties---also makes semantic sense, but we use this definition as it gives rise to faithfulness in the main theorem (\cref{thm.main}).
\begin{proposition}\label{prop.learners}
There exists a symmetric monoidal category $\lrn$ whose objects are sets and
whose morphisms are equivalence classes of learners.
\end{proposition}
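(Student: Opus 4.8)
The plan is to build $\lrn$ by hand and verify each clause of the definition of a symmetric monoidal category, leaning on the equivalence relation on learners to absorb the various canonical bijections of parameter spaces. \emph{Composition and identities.} Given learners $(P,I,U,r)\colon A\to B$ and $(Q,J,V,s)\colon B\to C$, define their composite to be the learner with parameter space $P\times Q$ whose structure maps are, writing $b\coloneqq I(p,a)$,
\begin{align*}
I''\big((p,q),a\big) &\coloneqq J(q,b),\\
U''\big((p,q),a,c\big) &\coloneqq \big(U(p,a,s(q,b,c)),\, V(q,b,c)\big),\\
r''\big((p,q),a,c\big) &\coloneqq r\big(p,a,s(q,b,c)\big);
\end{align*}
these are exactly the formulas read off from Figure~2. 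The identity learner on $A$ has one-point parameter space $\{\ast\}$, with $I(\ast,a)=a$, $U(\ast,a,a')=\ast$, and $r(\ast,a,a')=a'$.

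\emph{The category axioms.} First I would check that composition descends to equivalence classes: if $f$ and $g$ witness equivalences of the two factors, then $f\times g$ witnesses an equivalence of the composites, by direct substitution into the three defining equations. Unitality holds up to the canonical bijections $\{\ast\}\times P\cong P\cong P\times\{\ast\}$; expanding the composite of a learner with an identity learner leaves $I$, $U$, and $r$ literally unchanged under this identification, so the equivalence classes agree. Associativity is the one genuinely laborious clause: for a triple $A\to B\to C\to D$, both bracketings have parameter space related by the associator bijection $(P\times Q)\times R\cong P\times(Q\times R)$, and one must expand the doubly nested definitions of the update and request maps for each bracketing and see that they agree term by term under this bijection. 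The computation is mechanical---every intermediate quantity appears in the same position on both sides---but it is the step most in need of careful bookkeeping.

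\emph{Monoidal and symmetric structure.} Set $A\otimes B\coloneqq A\times B$ on objects, with monoidal unit the one-point set. The monoidal product of $(P,I,U,r)\colon A\to B$ and $(P',I',U',r')\colon A'\to B'$ is the learner with parameter space $P\times P'$ acting componentwise. Bifunctoriality---preservation of identities and the interchange law---follows by unwinding the definitions, the two sides having canonically identified parameter spaces and matching component functions. For the coherence isomorphisms (associator, left and right unitors, symmetry) I would take learners with one-point parameter space whose implementation is the evident bijection of underlying sets, whose update is trivial, and whose request is the inverse bijection; swapping the roles of $I$ and $r$ exhibits an inverse learner, so these are isomorphisms in $\lrn$. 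Since composing two such ``bijection learners'' yields, up to the equivalence relation, the bijection learner on the composite bijection, each coherence axiom---pentagon, triangle, the two hexagons, and $\sigma^2=\id$---reduces to the corresponding identity of bijections in the symmetric monoidal category $(\set,\times)$; the naturality squares for the associator, unitors and symmetry, which involve arbitrary learners, unwind to short direct checks of the same flavour as the associativity computation.

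\emph{Expected obstacle.} Nothing here is conceptually deep once the composition rule of Figure~2 is fixed; the only real work, and the only likely source of error, is the associativity (and, identically in spirit, the interchange) verification, where one must keep disciplined track of which copy of an intermediate output or parameter is fed into each request function so that the two bracketings visibly coincide. Everything else is inherited from the fact that $(\set,\times)$ is symmetric monoidal, transported across the embedding of bijections as trivial-parameter learners.
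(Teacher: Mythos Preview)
Your proposal is correct and follows essentially the same route as the paper's proof: verify identities and associativity directly (the paper organises the latter with string diagrams rather than a term-by-term expansion, but the content is the same), and obtain the monoidal coherence by transporting the structure of $(\set,\times)$ along the embedding of bijections as trivial-parameter learners. The paper phrases this last step as a functor $\set\to\lrn$ sending each function to a trivially parametrised learner, but your ``bijection learner'' construction and reduction of the coherence axioms to $\set$ is exactly this argument.
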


The proof of \cref{prop.learners} is given in Appendix A of the extended version \cite{FST}. For now, we
simply specify the composition, identities, monoidal product, and braiding for
this symmetric monoidal category. Note that although we write in terms of
representatives, each of these is well defined, respecting the equivalence
relation on learners.

\paragraph{Composition}
Suppose we have a pair of learners
\[
  A \xrightarrow{(P,I,U,r)} B \xrightarrow{(Q,J,V,s)} C.
\]
The composite learner $A \to C$ is defined to be $(P\times Q,\, I \ast J,\, U
\ast V,\, r\ast s)$, where the implementation function is
\[
  (I \ast J)(p,q,a) \coloneqq J(q,I(p,a))
\]
the update function is
\[
  (U\ast V)(p,q,a,c) \coloneqq \bigg(U\big(p,a,s(q,I(p,a),c)\big),
  V\big(q,I(p,a),c\big)\bigg),
\]
and the request function
\[
  (r\ast s)(p,q,a,c) \coloneqq r\big(p,a,s(q,I(p,a),c)\big).
\]

Let us also present the composition rule using string diagrams in
$(\set,\times)$. Given learners $(P,I,U,r)$ and $(Q,J,V,s)$ as above, the composite
implementation function can be written as
\[
\begin{tikzpicture}[oriented WD]
	\node[bb port sep=1, bb={2}{1}]                            (I)     {$I$};
	\node[bb port sep=1, bb={2}{1}, above right=-1 and .5 of I](J)     {$J$};
	\node[ibb={3}{1}, fit=(I) (J)]                          (outer) {};
	\node at ($(outer_in1')-(0.3,0)$) {\footnotesize $Q$};
	\node at ($(outer_in2')-(0.3,0)$) {\footnotesize $P$};
	\node at ($(outer_in3')-(0.3,0)$) {\footnotesize $A$};
	\node at ($(outer_out1')+(0.3,0)$) {\footnotesize $C$};
	\draw (outer_in1) to (J_in1);
	\draw (outer_in2) to (I_in1);
	\draw (outer_in3) to (I_in2);
	\draw (I_out1') to (J_in2);
	\draw (J_out1') to (outer_out1|-J_out1);
\end{tikzpicture}
\]
while the composite update--request function $(U \ast V, r \ast s)$ can be
written as:
\[ 
  \resizebox{.45\textwidth}{!}{
\begin{tikzpicture}[oriented WD]
	\node[bb port sep=1, bb={2}{1}]                           (I)    {$I$};
	\node[bb port sep=2, bb={3}{2}, above right=-4 and 1 of I](V)   {$V,s$};
	\node[bb port sep=2, bb={3}{2}, below right=-0.5 and 1 of V](U)	{$U,r$};
	\node[ibb={4}{3}, fit=(I) (V) (U)]                          (outer) {};
	\begin{scope}[font=\footnotesize]
  	\node at ($(outer_in1'|-V_in1)-(0.3,0)$) {$Q$};
  	\node at ($(outer_in2')-(0.3,0)$) {$P$};
  	\node at ($(outer_in3')-(0.3,0)$) {$A$};
  	\node at ($(outer_in4')-(0.3,0)$) {$C$};
  	\node at ($(outer_out1'|-V_out1)+(0.3,0)$) {$Q$};
  	\node at ($(outer_out2'|-U_out1)+(0.3,0)$) {$P$};
  	\node at ($(outer_out3'|-U_out2)+(0.3,0)$) {$A$};
  	\draw (outer_in1|-V_in1) to (V_in1);
  	\draw (outer_in2) -- ($(outer_in2)+(.2,0)$) to (I_in1);
  	\draw let \p1=(I.south west), \p2=($(U_in1)$), \n1=\bbportlen in
  		(outer_in2) -- ($(outer_in2)+(.2,0)$) to (\x1-\n1, \y2) -- (U_in1);
  	\draw (outer_in3) -- ($(outer_in3)+(.2,0)$) to (I_in2);
  	\draw let \p1=(I.south west), \p2=($(U_in2)$), \n1=\bbportlen in
  		(outer_in3) -- ($(outer_in3)+(.2,0)$)to (\x1-\n1, \y2) -- (U_in2);
  	\draw let \p1=(I.south east), \p2=($(outer_in4)$), \n1=\bbportlen in
  		(outer_in4) to (\x1+\n1, \y2) to (V_in3);
  	\draw (V_out1) to (outer_out1|-V_out1);
  	\draw (V_out2) to node[above=-2pt, pos=.1] {\scriptsize $B$} (U_in3);
  	\draw (U_out1) to (outer_out2|-U_out1);
  	\draw (U_out2) to (outer_out3|-U_out2);
  	\draw (I_out1) to node[above=-2pt] {\scriptsize $B$} (V_in2);
	\end{scope}
\end{tikzpicture}
}
\]
Here the splitting represents the diagonal map $A \to A \times
A$, i.e.\ $a \mapsto (a,a)$.

We hope that the reader might find visually tracing
through these diagrams helpful for making sense of the composition rule. To repeat the intuition from the introduction, suppose given current parameters $p\in P$ and $q\in Q$ and training data $a\in A$ and $c\in C$. $I$ takes $p$ and $a$ and produces some $b\in B$ for training the second component. Along with $q$ and $c$, $b$ is used to compute an updated parameter $q'$ together with a value $b'$ for training the first component. Along with $p$ and $a$, $b'$ is used to compute an updated parameter $p'$ together with a value $a'$.

\paragraph{Identities}
For each object $A$, we have the identity map
\[
  (\rr^0, \id,!,\pi_2)\colon A \longrightarrow A,
\]
where $\id\colon \rr^0 \times A  \to A$ is the
second projection (as this is a bijection, we abuse notation to write this
projection as $\id$),\: $!\colon \rr^0 \times A \times A \to \rr^0$ is the unique
function, and $\pi_2\colon \rr^0 \times A \times A \to A$ is the projection
onto the final factor (again, ignoring the $\rr^0$).

\paragraph{Monoidal product} The monoidal product of objects $A$ and $B$ is
simply their cartesian product $A \times B$ as sets. The monoidal product of
morphisms $(P,I,U,r)\colon A \to B$ and $(Q,J,V,s)\colon C \to D$ is defined to
be $(P\times Q,\,I\prl J,\,U\prl V,\,r\prl s)$, where the implementation function
is
\[
  	(I\prl J)(p,q,a,c) \coloneqq (I(p,a),J(q,c))
\]
the update function is
\[
	(U\prl V)(p,q,a,c,b,d) \coloneqq (U(p,a,b),V(q,c,d)) 
\]
and the request function is
\[
(r\prl s)(p,q,a,c,b,d) \coloneqq (r(p,a,b),s(q,c,d)).
\]
We use the notation $\;\prl\;$ because monoidal product can be thought of as
parallel---rather than series---composition.

We also present this in string diagrams:
\[
\begin{tikzpicture}[oriented WD]
	\node[bb port sep=1, bb={2}{1}]                            (I)     {$I$};
	\node[bb port sep=1, bb={2}{1}, below= of I]                  (J)     {$J$};
	\node[ibb={4}{2}, fit=(I) (J)]                          (outer) {};
	\node at ($(outer_in1')-(0.3,0)$) {\footnotesize $P$};
	\node at ($(outer_in2')-(0.3,0)$) {\footnotesize $Q$};
	\node at ($(outer_in3')-(0.3,0)$) {\footnotesize $A$};
	\node at ($(outer_in4')-(0.3,0)$) {\footnotesize $C$};
	\node at ($(outer_out1'|-I_out1)+(0.3,0)$) {\footnotesize $B$};
	\node at ($(outer_out2'|-J_out1)+(0.3,0)$) {\footnotesize $D$};
	\draw (outer_in1) to (I_in1);
	\draw (outer_in2) to (J_in1);
	\draw (outer_in3) to (I_in2);
	\draw (outer_in4) to (J_in2);
	\draw (I_out1) to (outer_out1|-I_out1);
	\draw (J_out1) to (outer_out2|-J_out1);
\end{tikzpicture}
\]

\[
  \resizebox{.25\textwidth}{!}{
\begin{tikzpicture}[oriented WD,scale=.7]
	\node[bb port sep=2, bb={3}{2}]                            (U)     {$U,r$};
	\node[bb port sep=2, bb={3}{2}, below= of U]               (V)     {$V,s$};
	\node[ibb={6}{4}, fit=(U) (V)]                          (outer) {};
	\node at ($(outer_in1')-(0.35,0)$) {\footnotesize $P$};
	\node at ($(outer_in2')-(0.35,0)$) {\footnotesize $Q$};
	\node at ($(outer_in3')-(0.35,0)$) {\footnotesize $A$};
	\node at ($(outer_in4')-(0.35,0)$) {\footnotesize $C$};
	\node at ($(outer_in5')-(0.35,0)$) {\footnotesize $B$};
	\node at ($(outer_in6')-(0.35,0)$) {\footnotesize $D$};
	\node at ($(outer_out1'|-U_out1)+(0.35,0)$) {\footnotesize $P$};
	\node at ($(outer_out2'|-U_out2)+(0.35,0)$) {\footnotesize $Q$};
	\node at ($(outer_out3'|-V_out1)+(0.35,0)$) {\footnotesize $A$};
	\node at ($(outer_out4'|-V_out2)+(0.35,0)$) {\footnotesize $C$};
	\draw (outer_in1) to (U_in1);
	\draw (outer_in2) to (V_in1);
	\draw (outer_in3) to (U_in2);
	\draw (outer_in4) to (V_in2);
	\draw (outer_in5) to (U_in3);
	\draw (outer_in6) to (V_in3);
	\draw (U_out1) to (outer_out1|-U_out1);
	\draw (U_out2) to (outer_out3|-V_out1);
	\draw (V_out1) to (outer_out2|-U_out2);
	\draw (V_out2) to (outer_out4|-V_out2);
\end{tikzpicture}
	}
\]

\paragraph{Braiding}
A symmetric braiding $A \times B \to B \times A$ is given by $(\rr^0, \sigma,
!, \sigma\circ \pi)$ where $\sigma\colon A\times B\to B\times A$ is the
usual swap function $(a,b)\mapsto(b,a)$ and $\pi\colon \rr^0 \times (A
\times B) \times (B \times A) \to B \times A$ is again the projection onto
the final factor.

A proof that this is a well-defined symmetric monoidal category can be
found in the extended version \cite{FST}.


\section{Gradient descent and backpropagation}\label{sec.functors}
In this section we show that gradient descent and backpropagation define a
strong symmetric monoidal functor from a symmetric monoidal category $\para$, of
differentiable parametrised functions between finite dimensional Euclidean
spaces, to the symmetric monoidal category $\lrn$ of learning algorithms.

We first define the category of differentiable parametrised functions. A
\emph{Euclidean space} is one of the form $\rr^n$ for some $n \in \nn$. We call $n$ the
\emph{dimension} of the space, and write an element $a \in \rr^n$ as
$(a_1,\dots,a_n)$, or simply $(a_i)_i$, where each $a_i \in \rr$. 
.
For Euclidean spaces $A=\rr^n$ and $B=\rr^m$, define a \emph{differentiable
parametrised function} $A \to B$ to be a pair $(P,I)$, where $P$ is a Euclidean space and
$I\colon P\times A\to B$ is a differentiable function. We call two such pairs $(P,I)$, $(P',I')$ equivalent if there exists a differentiable bijection $f\colon P \to P'$ such that for all $p \in P$ and $a \in A$ we have $I'(f(p),a) = I(p,a)$. Differentiable
parametrised functions between Euclidean spaces form a symmetric monoidal
category.

\begin{definition}
We write $\para$ for the strict symmetric monoidal category whose objects are
Euclidean spaces and whose morphisms $\rr^n \to \rr^m$ are equivalence classes
of differentiable parametrised functions $\rr^n \to \rr^m$. 

Composition of $(P,I)\colon \rr^n \to \rr^m$ and $(Q,J)\colon \rr^m \to \rr^\ell$ is
given by $(P \times Q,I \ast J)$ where
\[
(I \ast J)(p,q,a) = J(q,I(p,a)).
\]
The monoidal product of objects $\rr^n$ and $\rr^m$ is the object $\rr^{n+m}$,
while the monoidal product of morphisms $(P,I)\colon \rr^n \to \rr^m$ and $(Q,J)\colon
\rr^\ell \to \rr^k$ is given by $(P \times Q,I \prl J)$ where
\[
(I \prl J)(p,q,a,c) = \big(I(p,a),J(q,c)\big).
\]
The braiding $\rr^n \prl \rr^m \to \rr^m \prl \rr^n$ is given by $(\rr^0,\sigma)$ where $\sigma(a,b) =
(b,a)$.
\end{definition}
It is straightforward to check this is a well defined symmetric monoidal
category. We are now in a position to state the main theorem.

\begin{theorem} \label{thm.main}
Fix a real number $\ve > 0$ and $e(x,y)\colon \rr \times \rr \to \rr$
differentiable such that $\frac{\partial e}{\partial x}(x_0,-)\colon \rr \to \rr$
is invertible for each $x_0 \in \rr$.  Then we can define a faithful,
injective-on-objects, strong symmetric monoidal functor
\[
  L_{\ve,e}\colon\para \longrightarrow \lrn
\]
that sends each parametrised function $I\colon P \times A \to B$ to the learner
$(P,I,U_I,r_I)$ defined by
\[
  U_I(p,a,b) \coloneqq p - \varepsilon\nabla_pE_I(p,a,b)
\]
and
\[
  r_I(p,a,b) \coloneqq f_a\bigg(\nabla_a E_I(p,a,b)\bigg),
\]
where $E_I(p,a,b) \coloneqq \sum_j e(I_j(p,a),b_j)$, and $f_a$ is component-wise
application of the inverse to $\frac{\partial e}{\partial x}(a_i,-)$ for each
$i$.
\end{theorem}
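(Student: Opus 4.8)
The plan is to check, on chosen representatives, that $L_{\ve,e}$ preserves identities and composition, that it is strong symmetric monoidal, that it is injective-on-objects, and that it is faithful; well-definedness on equivalence classes is then read off the same computations. The substantive point is preservation of composition, which is exactly the backpropagation algorithm. I start with identities: for $I=\id\colon\rr^0\times A\to A$ we have $I_j(p,a)=a_j$, so $E_I(p,a,b)=\sum_j e(a_j,b_j)$; then $\nabla_pE_I$ is the unique element of $\rr^0$, giving $U_I=\,!$, and $(\nabla_aE_I(p,a,b))_i=\frac{\partial e}{\partial x}(a_i,b_i)$, so applying $f_a$ — which inverts $\frac{\partial e}{\partial x}(a_i,-)$ coordinatewise — yields $r_I(p,a,b)=b=\pi_2(p,a,b)$. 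Hence $L_{\ve,e}(\id_A)=(\rr^0,\id,!,\pi_2)=\id_A$ in $\lrn$; this is the first place the invertibility hypothesis is essential.

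For composition, let $(P,I)\colon A\to B$ and $(Q,J)\colon B\to C$, write $b\coloneqq I(p,a)$, and observe $E_{I\ast J}((p,q),a,c)=\sum_k e\big(J_k(q,I(p,a)),c_k\big)=E_J(q,b,c)$. The chain rule gives
\[
\nabla_pE_{I\ast J}=(D_pI)^{\!\top}\nabla_bE_J(q,b,c),\qquad \nabla_aE_{I\ast J}=(D_aI)^{\!\top}\nabla_bE_J(q,b,c),\qquad \nabla_qE_{I\ast J}=\nabla_qE_J(q,b,c),
\]
where $D_pI,D_aI$ are the Jacobians of $I$ at $(p,a)$. Set $b'\coloneqq r_J(q,b,c)=f_b\big(\nabla_bE_J(q,b,c)\big)$; since $f_b$ is invertible (again by the hypothesis on $e$), $(\nabla_bE_J(q,b,c))_j=\frac{\partial e}{\partial x}(b_j,b'_j)$. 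But $E_I(p,a,b')=\sum_j e(I_j(p,a),b'_j)$ with $I_j(p,a)=b_j$, so $\nabla_pE_I(p,a,b')=(D_pI)^{\!\top}w$ and $\nabla_aE_I(p,a,b')=(D_aI)^{\!\top}w$ for the very same vector $w=\nabla_bE_J(q,b,c)$. Therefore $\nabla_pE_{I\ast J}=\nabla_pE_I(p,a,b')$ and $\nabla_aE_{I\ast J}=\nabla_aE_I(p,a,b')$, which unwinds to
\[
U_{I\ast J}((p,q),a,c)=\big(U_I(p,a,r_J(q,I(p,a),c)),\,U_J(q,I(p,a),c)\big),\qquad r_{I\ast J}((p,q),a,c)=r_I\big(p,a,r_J(q,I(p,a),c)\big),
\]
which are precisely the formulas for $L_{\ve,e}(I)\ast L_{\ve,e}(J)$ in $\lrn$. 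Thus $L_{\ve,e}$ preserves composition. This step says the backpropagated request $b'=r_J(q,I(p,a),c)$ transports $\nabla_bE_J$ back through $I$ — it \emph{is} backpropagation — and it is where the invertibility of $\frac{\partial e}{\partial x}(x_0,-)$ does the real work, by allowing $\nabla_bE_J$ to be recovered from $r_J$.

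Monoidality is a milder version of the same: $E_{I\prl J}((p,q),(a,c),(b,d))=E_I(p,a,b)+E_J(q,c,d)$, so the gradient splits blockwise and, since $f$ acts coordinatewise, $U_{I\prl J}=U_I\prl U_J$ and $r_{I\prl J}=r_I\prl r_J$, i.e.\ $L_{\ve,e}(I\prl J)=L_{\ve,e}(I)\prl L_{\ve,e}(J)$. On objects $L_{\ve,e}(\rr^n)=\rr^n$ with $\rr^n\times\rr^m=\rr^{n+m}$; one takes the evident identity structure isomorphisms $L_{\ve,e}(A)\otimes L_{\ve,e}(B)\xrightarrow{\ \sim\ }L_{\ve,e}(A\otimes B)$, and the associativity, unit, and symmetry coherence diagrams are routine because $\para$ is strict and the monoidal product of $\lrn$ is cartesian product of sets. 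In particular one checks, by the same one-line computation of $U_\sigma$ and $r_\sigma$ as for the identity, that $L_{\ve,e}$ sends the braiding $(\rr^0,\sigma)$ of $\para$ to the braiding $(\rr^0,\sigma,!,\sigma\circ\pi)$ of $\lrn$. Injectivity on objects is immediate, as $L_{\ve,e}$ is the identity assignment on Euclidean spaces.

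For faithfulness, suppose $L_{\ve,e}(P,I)$ and $L_{\ve,e}(P',I')$ are equivalent learners via a bijection $g\colon P\to P'$. The implementation clause already gives $I'(g(p),a)=I(p,a)$, which is exactly the $\para$-equivalence condition provided $g$ is a differentiable bijection. That $g$ is forced to be differentiable is extracted from the update clause $g(p)-\ve\nabla_pE_{I'}(g(p),a,b)=g\big(p-\ve\nabla_pE_I(p,a,b)\big)$ together with $E_{I'}\circ(g\times\id)=E_I$: varying $(a,b)$ makes this relation hold for enough parameter displacements to pin $g$ down (indeed forcing it to be, coordinatewise, an affine isometry). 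This is precisely the place where defining equivalence in $\lrn$ via \emph{bijections} rather than surjections is exactly what is needed, since a merely surjective connecting map could collapse parameters that $I$ distinguishes. I expect the composition step to be the main obstacle — not any single calculation, but the need to align the chain-rule expressions for $\nabla_pE_{I\ast J}$ and $\nabla_aE_{I\ast J}$ with $\nabla_pE_I(p,a,b')$ and $\nabla_aE_I(p,a,b')$ at the one specific value $b'=r_J(q,I(p,a),c)$, and to see that the coordinatewise inverse $f$ is precisely the bookkeeping that makes them agree; the faithfulness argument is the secondary delicate point, as it turns on exactly how the two equivalence relations are set up.
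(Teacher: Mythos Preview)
Your argument follows essentially the same route as the paper's: check preservation of identities, of composition, and of the monoidal product, with composition being the substantive step. Your treatment of composition is the paper's argument compressed --- where the paper verifies $\nabla_p E_I\big(p,a,r_J(q,I(p,a),c)\big)=\nabla_p E_{I\ast J}(p,q,a,c)$ by an explicit coordinate-by-coordinate chain-rule expansion, you package the same computation as $(D_pI)^{\!\top}\nabla_b E_J$ and then use the invertibility hypothesis to recover $\nabla_b E_J$ from $b'=r_J$. The identity and monoidal checks likewise match the paper's.

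The one real divergence is faithfulness. The paper dispatches it in a sentence (``since $I$ maps to $(P,I,U_I,r_I)$, the functor $L$ is injective on morphisms''), glossing over the mismatch that equivalence in $\lrn$ uses arbitrary set bijections while equivalence in $\para$ demands \emph{differentiable} ones. You correctly flag this as the delicate point and try to extract differentiability of $g$ from the update clause. But that argument has a gap: if $I$ does not depend on $p$ (or more generally if the vectors $\nabla_p E_I(p,a,b)$ fail to span as $(a,b)$ varies), then the update map is the identity on $P$ and the update clause places no constraint on $g$ whatsoever --- your ``enough parameter displacements'' premise fails, and the claimed affine-isometry conclusion does not follow. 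Indeed, taking $I\colon\rr\times A\to B$ and $I'\colon\rr^2\times A\to B$ both constant in the parameter (and agreeing as functions of $a$) yields learners that are $\lrn$-equivalent via any set bijection $\rr\to\rr^2$, while the underlying parametrised functions are not $\para$-equivalent since no differentiable bijection $\rr\to\rr^2$ exists. So your faithfulness argument is incomplete --- though more candid than the paper's about where the difficulty lies --- and the claim itself appears to require an additional hypothesis.
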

\begin{proof}[Proof (sketch)]
The proof of this theorem amounts to observing that the chain rule is functorial
given the above setting. The key points are the use of the chain rule to show
the functoriality of the $P$-part of the update function and the request
function. A full proof is given in the extended version \cite{FST}. 
\end{proof}

We call $\ve$ the \emph{step size}, $e$ the \emph{error function}, and $E_I$ the
\emph{total error (with respect to $I$)}. We also call the functors $L_{\ve,e}$, so named
because they turn parametrised functions into a \emph{learning} algorithms,
the \emph{gradient descent/backpropagation functors}.

\begin{remark}
The update function $U_I$ encodes what is known as \emph{gradient descent}: the
parameter $p$ is updated by moving it an $\ve$-step in the direction that most
reduces the total error $E_I$. 

The request function $r_I$ encodes the \emph{backpropagation} value, passing
back the gradient of the total error with respect to the input $a$, as modified
by the invertible function $f_a$. To pick an example, the functoriality of
$L_{\ve,e}$ says that the following two update functions are equal:
\begin{itemize}
\item The update function $U_{((I\,\prl\, J)\ast K) \ast M}$, which represents
gradient descent on the
composite of parametrised functions $((I\prl J)\ast K) \ast M$.
\item The update function $((U_I\prl U_J)\ast U_K) \ast U_M$, which represents
the composite, according to the structure in $\lrn$, of the update functions
$U_I$, $U_J$, $U_K$, and $U_M$ together with the request functions $r_I$,
$r_J$, $r_K$, and $r_M$.
\end{itemize}
This shows that we may compute gradient descent by local computation of the
gradient together with local message passing. This is the backpropagation
algorithm.
\end{remark}

\begin{example}[Quadratic error] \label{ex.quaderror}
Quadratic error is given by the error function $e(x,y)
\coloneqq\tfrac12(x-y)^2$,  so that the total error is given by
\[
E_I(p,a,b) = \tfrac12 \sum_j (I_j(p,a) -b_j)^2 = \tfrac12 \norm{I(p,a)-b}^2.
\] 
In this case $\frac{\partial e}{\partial x}(x_0,-)$ is the function $y\mapsto
x_0-y$. This function is its own inverse, so we have $f_{x_0}(y)\coloneqq x_0-y$.

Fixing some step size $\ve >0$, we have 
\begin{align*}
U_I(p,a,b) &\coloneqq p - \varepsilon\nabla_pE_I(p,a,b) \\
&= \Big(p_k-\ve\sum_j(I_j(p,a)-b_j)\tfrac{\partial I_j}{\partial p_k}(p,a)\Big)_k
\end{align*}
and similarly
\begin{align*}
r_I(p,a,b) &\coloneqq a - \nabla_aE_I(p,a,b)\\
&=\Big(a_i - \sum_j(I_j(p,a)-b_j)\tfrac{\partial I_j}{\partial
a_i}(p,a)\Big)_i.
\end{align*}
Thus given this choice of error function, the functor $L_{\ve,e}$ of \cref{thm.main}
just implements, as update function, the usual gradient descent with step size
$\ve$ with respect to the quadratic error.
\end{example}

\begin{remark}
Comparing the requests $r_I$ to the updates $U_I$ in \cref{ex.quaderror}, one may
notice that they are similar, except that the former seem to be missing the $\ve$.
One might wonder why the two are different or where the $\ve$ factor has gone.

\cref{thm.main} shows, however, that in fact the similarity between $r_I$ and $U_I$
is something of a coincidence. What is
important about requests, and hence the messages passed backward in
backpropagation, is the fact that they are constructed by inverting certain partial
derivatives which are then applied to the gradient of the total error with
respect to the input. We interpret the result as a `corrected' input value that, if used instead,
would reduce the total error with respect to the given output and current
parameter value. In particular, the resemblance of the request values to gradient
descent in \cref{ex.quaderror} is just an artifact of the choice of quadratic
error function $e(x,y) \coloneqq\tfrac12(x-y)^2$.
\end{remark}

\section{From networks to parametrised functions} \label{sec.nets}
In the previous section we showed that gradient descent and backpropagation---for a given choice of error function and step size---define a
functor from differentiable parametrised functions to supervised learning
algorithms. But backpropagation is often considered an algorithm executed on a
neural net. How do neural nets come into the picture? As we shall see, neural
nets are a method for defining parametrised functions from network architectures.

This method, like backpropagation itself, is also compositional---it respects
the gluing together of neural networks. To formalise this, we first define a
category $\nnet$ of neural networks. Implementation of each neural net will then
define a parametrised function, and in fact we get a functor
$I\colon\nnet\to\para$. Note that just as defining a gradient
descent/backpropagation functor depends on a choice (namely, of error function
and step size), so too does defining $I$.  Namely, we must choose an activation
function.

Recall from the introduction that a neural network layer of type $(m,n)$ is a
subset of $[m] \times [n]$, where $m, n \in \nn$ and $[n]=\{1,\dots,n\}$. A
\emph{$k$-layer neural network of type $(m,n)$} is a sequence of neural network
layers of types $(n_0,n_1)$, $(n_1,n_2)$, $\dots$, $(n_{k-1},n_k)$, where
$n_0=m$ and $n_k=n$. A \emph{neural network of type $(m,n)$} is a $k$-layer
neural network for some $k$.

Given a neural network of type $(m,n)$ and a neural network of type $(n,p)$ we
may concatenate them to get a neural network of type $(m,p)$. Note that when $m=n$, we consider the $0$-layer neural network to be a morphism. Concatenating any neural network on either side with the $0$-layer neural network does not change it.

\begin{definition}
The category $\nnet$ of neural networks has as objects natural numbers and as
morphisms $m \to n$ neural networks of type $(m,n)$. Composition is given by
concatenation of neural networks. The identity morphism on $n$ is the $0$-layer neural network.
\end{definition}

Since composition is just concatenation it is immediately
associative, and we have indeed defined a category.

\begin{proposition}\label{prop.nnet_para}
Given a differentiable function $\sigma\colon \rr \to \rr$, we have a functor
\[
I^\sigma\colon \nnet \longrightarrow \para.
\]
On objects, $I^\sigma$ maps each natural number $n$ to the $n$-dimensional Euclidean
space $\rr^n$. 

On morphisms, each 1-layer neural network $C\colon m \to n$ is mapped to the
parametrised function
\begin{align*}
I^\sigma_C\colon \rr^{\lvert C\rvert+n} \times \rr^m &\longrightarrow \rr^n; \\
\big((w_{ji},w_j),x_i\big)_{\substack{1 \le i \le m \\ 1 \le j \le n}} &\longmapsto \Bigg(\sigma\bigg(\sum_{i}
w_{ji}x_i+w_j\bigg)\Bigg)_{1 \le j \le n}.
\end{align*}
Given a neural net $N = C_1, \dots, C_k$, the image under $I^\sigma$ is the composite
of the image of each layer:
\[
I^\sigma_N = I^\sigma_{C_1} \ast \dots \ast I^\sigma_{C_k}
\]
\end{proposition}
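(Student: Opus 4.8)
The claim is that $I^\sigma$ is a well-defined functor $\nnet \to \para$. Since objects go to Euclidean spaces in the obvious way and composition in $\nnet$ is concatenation of layer-sequences, the bulk of the argument is: (i) check that the assignment on morphisms is well-defined (each $1$-layer net does define a \emph{differentiable} parametrised function, and the formula for a $k$-layer net is independent of how we might regroup the layers), and (ii) check functoriality, i.e.\ that $I^\sigma$ preserves composition and identities. I would organize the proof around these two points.

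First I would treat the $1$-layer case. Given $C \subseteq [m]\times[n]$, the stated formula $I^\sigma_C$ has domain $\rr^{|C|+n}\times\rr^m$: the $|C|$ coordinates are the weights $w_{ji}$ for $(j,i)\in C$, the $n$ coordinates are the biases $w_j$, and for $(j,i)\notin C$ we simply set $w_{ji}=0$ (so the sum $\sum_i w_{ji}x_i$ ranges only over $i$ with $(j,i)\in C$). Each component $x \mapsto \sigma(\sum_i w_{ji}x_i + w_j)$ is a composite of a polynomial (hence smooth) map $\rr^{|C|+n}\times\rr^m\to\rr$ with $\sigma\colon\rr\to\rr$; since $\sigma$ is differentiable and composition of differentiable maps is differentiable, $I^\sigma_C$ is differentiable. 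Hence $(\rr^{|C|+n}, I^\sigma_C)$ is a genuine morphism $m\to n$ in $\para$. The $0$-layer net of type $(n,n)$ must be sent to the identity of $\rr^n$ in $\para$, which by the definition of $\para$ is the class of $(\rr^0, \pi_2)$; I would note that the empty composite of layer-functions is by convention this parametrised identity, so this is consistent.

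Next, functoriality. A general morphism $N\colon m\to n$ in $\nnet$ is a sequence $C_1,\dots,C_k$, and by definition we declare $I^\sigma_N := I^\sigma_{C_1}\ast\cdots\ast I^\sigma_{C_k}$, the iterated composite in $\para$; this is unambiguous because $\ast$ in $\para$ is associative (which holds because function composition is associative and the parameter space $P\times Q$ bookkeeping is associative up to the equivalence defining $\para$). Composition in $\nnet$ of $N = C_1,\dots,C_k$ with $N' = C_1',\dots,C_\ell'$ is the concatenation $C_1,\dots,C_k,C_1',\dots,C_\ell'$, so by definition $I^\sigma_{N\ast N'} = I^\sigma_{C_1}\ast\cdots\ast I^\sigma_{C_k}\ast I^\sigma_{C_1'}\ast\cdots\ast I^\sigma_{C_\ell'} = I^\sigma_N \ast I^\sigma_{N'}$, using associativity of $\ast$ again. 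Identities are preserved by the $0$-layer observation above, and concatenating with the $0$-layer net leaves a net unchanged, matching the fact that $\ast$-composing with the identity of $\para$ is trivial. This establishes that $I^\sigma$ is a functor.

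I do not expect a genuine obstacle here — the statement is essentially a bookkeeping lemma, and the only substantive input is that $\sigma$ differentiable makes each layer map differentiable so that the images land in $\para$ rather than merely in the category of parametrised continuous (or set-theoretic) functions. The one place to be slightly careful is the convention that the ``$1$-layer formula'' is the primitive datum and the $k$-layer value is \emph{defined} by the composite, so that functoriality on concatenation is almost true by construction; the content is then just that $\para$'s composition is associative and unital, which was already asserted when $\para$ was defined. I would remark in passing that $I^\sigma$ is typically neither faithful nor full (different layers can implement the same parametrised function, and not every differentiable parametrised function arises from a network), but that is not needed for the proposition.
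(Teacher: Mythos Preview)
Your proposal is correct and follows essentially the same approach as the paper's proof, which is terse: it notes that each layer map $I^\sigma_C$ is differentiable (hence so are composites), that the $0$-layer net maps to the empty composite and hence to the identity, and that composition is preserved by definition. Your write-up simply expands on each of these points with more care (the polynomial-then-$\sigma$ argument for differentiability, and the explicit appeal to associativity of $\ast$ in $\para$), which is fine.
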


We call $\sigma$ the \emph{activation function}, the $w_{ji}$
\emph{weights}, where $(i,j) \in C$, and the $w_j$ \emph{biases}, where $j
\in [n]$.

\begin{proof}
The proof of this proposition is straightforward. Note in particular that the
image $I^\sigma_C$ of each layer $C$ is differentiable, and so their composites are
too. Also note that the image of a $0$-layer neural net is the empty composite,
so identities map to identities.  Composition is preserved by definition. 
\end{proof}

Composing an implementation functor $I^\sigma$ with a gradient descent/backpropagation
functor $L_{\ve,e}$, we get a functor 
\[
  \begin{tikzcd}[row sep=1.5ex]
\nnet \ar[rr] \ar[dr, "I^\sigma" below] & & \lrn \\
& \para \ar[ur, "L_{\ve,e}" below]
\end{tikzcd}
\]
This states that, given choices of activation function $\sigma$, error function $e$, and step
size $\ve$, a neural net defines a supervised learning algorithm, and does so in a
compositional way.

A symmetric monoidal structure, both on $\nnet$ and on the above functors, can
be given by generalising the category $\nnet$ to the category where morphisms
are directed acyclic graphs with interfaces; details on such a category can be
found in \cite{FC}, see also \cref{rem.idag}.

In Section \ref{sec.examples}, we will compute an extended example of the use of neural nets to compositionally define supervised learning algorithms. Before
this, however, we discuss additional compositional structure available to us in
$\lrn$, $\para$, and the aforementioned
monoidal generalisation of $\nnet$.

\section{Networking in $\lrn$}\label{sec.bimonoids}
Our formulation of supervised learning algorithms as morphisms in a monoidal
category means learning algorithms can be formed by combining other learning
algorithms in sequence and in parallel. In fact, as hinted at by neural networks
themselves, more structure is available to us: we are able to form new learning
algorithms by combining others in networks of learners where wires can split and
merge. Formally, this means each object in the category of learners is equipped
with the structure of a bimonoid.

For this, note first that the symmetric monoidal category $\fvect$ of linear
maps between Euclidean spaces sits inside the category $\para$ of parametrised
functions; we simply consider each linear map as parametrised by the trivial
parameter space $\rr^0$.  Given a choice of step size and error function, and
hence a functor $L_{\ve,e}\colon \para \to \lrn$ as in \cref{thm.main}, we thus have an inclusion 
\[
  \fvect\hookrightarrow \para \stackrel{L_{\ve,e}}\hookrightarrow \lrn.
\]
This allows us to construct a learning algorithm---that is, a morphism in
$\lrn$---as the image of any morphism in $\fvect$, and the output algorithms
obey the same equations as the input linear maps. In particular, from graphical
linear algebra \cite{BE,BSZ} we know that each object in $\fvect$ is equipped
with a bimonoid structure, so we can use our functor $L_{\ve,e}$ to equip each
object of the form $\rr^n$ in $\lrn$ with a bimonoid structure. This bimonoid
structure is what makes the neural network notation feasible: we can interpret
the splitting and combining in a way coherent with composition.

In fact, the bimonoids constructed depend only on the choice of error function;
we need not specify the step size. As an example, we detail the construction
using backpropagation with respect to the quadratic error (\cref{ex.quaderror}).

\begin{proposition} \label{prop.bimonoids}
Gradient descent with respect to the quadratic error and step size $\ve$
defines a symmetric monoidal functor $\fvect \to \lrn$. This implies each object
in the image of this functor can be equipped with the structure of a bimonoid.
\end{proposition}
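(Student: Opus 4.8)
The plan is to obtain the functor $\fvect \to \lrn$ as a composite
\[
\fvect \hookrightarrow \para \xrightarrow{L_{\ve,e}} \lrn ,
\]
where $e$ is the quadratic error $e(x,y)=\tfrac12(x-y)^2$. First I would check that $e$ satisfies the hypothesis of \cref{thm.main}: by \cref{ex.quaderror} the map $\tfrac{\partial e}{\partial x}(x_0,-)$ is $y\mapsto x_0-y$, which is its own inverse and hence invertible for every $x_0\in\rr$. Thus \cref{thm.main} applies and yields a strong symmetric monoidal functor $L_{\ve,e}\colon\para\to\lrn$ (in fact faithful and injective-on-objects, though we will not need this).

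Next I would verify that the inclusion $\fvect\hookrightarrow\para$ — sending a linear map $\ell\colon\rr^n\to\rr^m$ to the parametrised function $(\rr^0,\ell)$, with $\ell$ regarded as a map $\rr^0\times\rr^n\to\rr^m$ ignoring its trivial parameter — is symmetric monoidal. The checks are routine: identities are linear and go to identities; the composite in $\para$ of $(\rr^0,\ell)$ and $(\rr^0,\ell')$ is $(\rr^0\times\rr^0,\ell\ast\ell')$, equivalent to $(\rr^0,\ell'\circ\ell)$ via $\rr^0\times\rr^0\cong\rr^0$; the monoidal product $(\rr^0,\ell)\prl(\rr^0,\ell')$ is $(\rr^0,\ell\oplus\ell')$, matching the direct-sum monoidal structure of $\fvect$ (both send $\rr^n,\rr^m$ to $\rr^{n+m}$); and the braiding of $\fvect$ is the linear swap, sent to the braiding of $\para$. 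Since a composite of symmetric monoidal functors is symmetric monoidal, this produces the desired symmetric monoidal functor $\fvect\to\lrn$.

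For the bimonoid claim, I would recall from graphical linear algebra \cite{BE,BSZ} that every object $\rr^n$ of $\fvect$ carries a bimonoid structure whose multiplication is addition $\rr^n\oplus\rr^n\to\rr^n$, unit the zero map $\rr^0\to\rr^n$, comultiplication the diagonal $\rr^n\to\rr^n\oplus\rr^n$, and counit the zero map $\rr^n\to\rr^0$ — all linear maps, with the (co)monoid and bialgebra laws holding as equalities of linear maps. A symmetric monoidal functor preserves the monoidal product, unit object, braiding, and composition, hence carries each such equation to the corresponding one in $\lrn$; so it sends bimonoids to bimonoids. Applying this to $\fvect\to\lrn$ equips each object $\rr^n$ in the image with a bimonoid structure in $\lrn$ whose structure maps are the images under $L_{\ve,e}$ of those four linear maps.

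I do not expect a genuine obstacle: the substance is entirely in \cref{thm.main}, and the only new verification — that $\fvect\hookrightarrow\para$ is monoidal — is immediate. If a concrete description of these bimonoids in $\lrn$ were wanted, it would just be a matter of unwinding $L_{\ve,e}$ on the four maps using the formulas of \cref{ex.quaderror} (for instance the comultiplication becomes the learner $(\rr^0,\Delta,!,r)$ on $\rr^n$ with $r(a,(b,b'))=b+b'-a$), but no such computation is needed for the statement as given.
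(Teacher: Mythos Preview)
Your proposal is correct and follows essentially the same approach as the paper: the paper also obtains the functor as the composite $\fvect \hookrightarrow \para \xrightarrow{L_{\ve,e}} \lrn$ with quadratic error, invokes graphical linear algebra for the bimonoid structure on $\fvect$, and transports it along the symmetric monoidal functor. Your sketch is in fact slightly more explicit than the paper about verifying that the inclusion $\fvect \hookrightarrow \para$ is symmetric monoidal, and your computed request function for the comultiplication agrees with the paper's table.
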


Explicitly, the bimonoid maps are given as follows. Note they all have
trivial parameter space $\rr^0$, so we denote the unique update function $!\colon\rr^0 \times A \times B \to\rr^0$. 

\smallskip
\noindent
\begin{tabular}{ |>{\centering\arraybackslash}m{.14\textwidth}
|>{\centering\arraybackslash}m{.13\textwidth} |>{\centering\arraybackslash}m{.13\textwidth} |}
\hline
& Implementation & Request \\
\hline
Multiplication~$\mu$ \newline
$
(1,I_\mu,!,r_\mu)
$
$
\begin{tikzpicture}[oriented WD]
	\node[ibb={2}{1}]                          (outer) {};
	\node at ($(outer_in1')-(0.3,0)$) {\footnotesize $A$};
	\node at ($(outer_in2')-(0.3,0)$) {\footnotesize $A$};
	\node at ($(outer_out1')+(0.3,0)$) {\footnotesize $A$};
	\draw (outer_in1) -- (outer_in1') to (outer_out1') -- (outer_out1);
	\draw (outer_in2) -- (outer_in2') to (outer_out1') -- (outer_out1);
\end{tikzpicture}
$
&
$I_\mu(a_1,a_2)=a_1 +a_2$
& 
$r_\mu((a_1,a_2),a_3)=(a_3-a_2,a_3-a_1)$
\\
\hline
Unit $\eta$

$
(1,I_\eta,!,r_\eta)
$
$
\begin{tikzpicture}[oriented WD]
	\node[ibb={0}{1}]                          (outer) {};
	\node at ($(outer_out1')+(0.3,0)$) {\footnotesize $A$};
	\node[draw, fill, circle, inner sep=1pt] (u) at ($(outer_out1)-(.45,0)$) {};
	\draw (u) to (outer_out1);
\end{tikzpicture}
$
&
$I_\eta(0)=0$
&
$r_\eta(a)=0$
\\
\hline
\small
Comultiplication~$\delta$
$
(1,I_\delta,!,r_\delta)
$
$
\begin{tikzpicture}[oriented WD]
	\node[ibb={1}{2}]                          (outer) {};
	\node at ($(outer_in1')-(0.3,0)$) {\footnotesize $A$};
	\node at ($(outer_out1')+(0.3,-0.2)$) {\footnotesize $A$};
	\node at ($(outer_out2')+(0.3,0.2)$) {\footnotesize $A$};
	\draw (outer_in1) -- (outer_in1') to ($(outer_out1')+(0,-0.2)$) --
	($(outer_out1)+(0,-0.2)$);
	\draw (outer_in1) -- (outer_in1') to ($(outer_out2')+(0,0.2)$) --
	($(outer_out2)+(0,0.2)$);
\end{tikzpicture}
$
&
$I_\delta(a)= (a,a)$
&
$r_\delta(a_1,(a_2,a_3))=a_2+a_3 -a_1$
\\
\hline
Counit $\epsilon$
$
(1,I_\epsilon,!,r_\epsilon)
$
$
\begin{tikzpicture}[oriented WD]
	\node[ibb={1}{0}]                          (outer) {};
	\node at ($(outer_in1')-(0.3,0)$) {\footnotesize $A$};
	\node[draw, fill, circle, inner sep=1pt] (u) at ($(outer_in1)+(.45,0)$) {};
	\draw (outer_in1) to (u);
\end{tikzpicture}
$
&
$I_\epsilon(a)=0$
&
$r_\epsilon(a)= 0$
\\
\hline
\end{tabular}
\smallskip

\begin{remark} 
We actually have many different bimonoid structures in $\lrn$: each choice of
error function defines one, and these are often distinct. For example, if we
choose $e(x,y) = xy$ then the request function on the multiplication is
instead given by $r'_\mu(a_1,a_2,a_3) = (a_3,a_3)$ and the request function
on the comultiplication is instead given by $r'_\delta(a_1,a_2,a_3) =
a_2+a_3$. While this is a rather strange error function---minimising error
entails sending outputs to $0$---the existence of such structures is
interesting.
\end{remark}

A choice of bimonoid structures, such as that given by \cref{prop.bimonoids},
allows us to interpret network diagrams in the monoidal category $(\lrn,\,\prl\,)$ from \cref{prop.learners}. Indeed, they give canonical
interpretations of splitting, joining, initializing, and discarding wires.

\begin{example}[Building neurons]
As learning algorithms implemented with respect to quadratic error (see \cref{ex.quaderror}) and some step size $\ve$,
neural networks have a rather simple structure: they are generated by three
basic learning algorithms---scalar multiplication $\lambda$, bias $\beta$, and
an activation function $\sigma$---together with the bimonoid multiplication $\mu$ and
comultiplication $\delta$ given by \cref{prop.bimonoids}. 

The scalar multiplication learning algorithm $\lambda\colon \rr \to \rr$, which
we shall represent graphically by the string diagram in $\lrn$\footnote{Note
  that these are string diagrams in $(\lrn,\,\prl\,)$, while the string diagrams
  of \cref{sec.learners} were string diagrams in $(\set,\times)$. As always, string
  diagrams represent morphisms in a category, with the domain at the left of the
  diagram and codomain on the right. For more details see the extended version \cite{FST}.
}
\[
\begin{tikzpicture}[oriented WD]
	\node[bb={1}{1}]		 (w)     {$\lambda$};
	\node[ibb={1}{1}, fit=(w)]                (outer) {};
	\draw (outer_in1) to (w_in1);
	\draw (w_out1) to (outer_out1);
\end{tikzpicture}
\]
is given by the parameter space $\rr$, implementation function $\lambda(w,x) =
wx$, update function $U_\lambda(w,x,y) = w-\ve x(wx-y)$, and request function
$r_\lambda(w,x,y) = x-w(wx-y)$.

The bias learning algorithm $\beta\colon \rr^0 \to \rr$, which we represent
\[
\begin{tikzpicture}[oriented WD]
	\node[bb={0}{1}]		 (b)     {$\beta$};
	\node[ibb={1}{1}, fit=(b)]                (outer) {};
	\draw (b_out1) to (outer_out1);
\end{tikzpicture}
\]
is given by the parameter space $\rr$, implementation $\beta(w) = w$, update
function $U_\beta(w,y) = (1-\ve)w+\ve y$, and trivial request function, since it
has trivial input space.

The activation function learning algorithm $\sigma\colon \rr \to \rr$,
represented
\[
\begin{tikzpicture}[oriented WD]
	\node[bb={1}{1}]		 (s)     {$\sigma$};
	\node[ibb={1}{1}, fit=(s)]                (outer) {};
	\draw (outer_in1) to (s_in1);
	\draw (s_out1) to (outer_out1);
\end{tikzpicture}
\]
has trivial parameter space, and is specified by some choice of activation
function $\sigma\colon \rr \to \rr$, together with the trivial update function
and the request function $r_\sigma(x,y) = x-(x-y)\tfrac{\partial
\sigma}{\partial x}(x)$. 

Then, every neuron in a neural network can be understood as a composite of these
generators as follows: first, a monoidal product of the required number of
scalar multiplication algorithms and a bias algorithm, then a composite of
$\mu$'s, an activation function, and finally a composite of $\delta$s.
\[
  \resizebox{.45\textwidth}{!}{
\begin{tikzpicture}[oriented WD, bb port sep=1, text height=1.5ex, text depth=.5ex]
	\node[bb={1}{1}] (w1) {$\lambda$};
	\node[bb={1}{1}, below=1 of w1] (w2) {$\lambda$};
	\node[bb={1}{1}, below=2 of w2] (wn) {$\lambda$};
	\node at ($(w2.center)!.55!(wn.center)$) (vdots) {$\vdots$};
	\node[bb={0}{1}, below=1 of wn] (b) {$\beta$};
	\node[bb={1}{1}, right=2 of vdots]  (activ)	{$\sigma$};
	\node[ibb={4}{6}, fit=(w1) (b) (activ)]  (outer) {};
	\draw (outer_in1) to (w1_in1);
	\draw (outer_in2) to (w2_in1);
	\node at ($(outer_in3)!.3!(vdots)$) {$\vdots$};
	\node at ($(activ_in1)!.5!(vdots)$) {$\vdots$};
	\draw (outer_in4) to (wn_in1);
	\draw (w1_out1) to (activ_in1);
	\draw (w2_out1) to (activ_in1);
	\draw (wn_out1) to (activ_in1);
	\draw (b_out1) to (activ_in1);
	\node[above left=1 and 0 of activ_in1] {$\mu$'s};
	\node[above right=1 and 0 of activ_out1] {$\delta$'s};
	\foreach \i in {1,2,5,6} {
		\draw (activ_out1) to (outer_out\i);
	}
	\node at ($(activ_out1)!.55!(outer_out1|-activ_out1)$) {$\vdots$};
\end{tikzpicture}
}
\]
Composing these units using the composition rule in $\lrn$ further constructs
any learning algorithm that can be obtained by gradient descent and
backpropagation on a neural network with respect to the quadratic error.
\end{example}

\begin{example}[Weight tying]

Weight tying (or weight sharing) in neural networks is a method by which
parameters in different parts of the network are constrained to be equal. It is
used in convolutional neural networks, for example, to force the network to
learn the same sorts of basic shapes appearing in different parts of an image
\cite{LCB}.  This is easily represented in our framework. Before explaining how
this works, we first explain a way of factoring morphisms in $\para$ into basic
parts.

Morphisms in $\para$ are roughly generated by morphisms of two different types:
trivially-parametrised functions and parametrised constants. Given a
differentiable function $f\colon\rr^n\to\rr^m$, we consider it a \emph{trivially
parametrised function} $\rr^0\times\rr^n\to\rr^m$, whose parameter space
$P=\rr^0$ is a point.  By a \emph{parametrised constant}, we mean an identity
morphism $1_P\colon P\to P$, considered as a parametrised function $P \times
\rr^0\to P$.

In particular, every parametrised function can be written as a composite, using
the bimonoid structure, of a trivially parametrised function and a parametrised
constant. To see this, we use string diagrams in $(\para,\,\prl\,)$, where here we denote a parametrised function $I\colon P\times A\to B$ as a box
labeled $(P,I)$ with input $A$ and output $B$. It is easy to check that any
parametrised function $I\colon P\times A\to B$ is the composite of a trivially
parametrised function and a parametrised constant as follows
\[
\resizebox{.5\textwidth}{!}{
\begin{tikzpicture}[oriented WD, inner
  xsep=5pt, baseline=(current bounding box.center)]
	\node[bb={2}{1}] (0I) {$(\,\rr^0,I\,)$};
	\node[bb={0}{1}, left=.5 of 0I_in1] (Pid) {$(\,P,1_P\,)$};
	\node[ibb={0}{0}, bbx=.5cm, fit=(Pid) (0I)] (outer) {};
	\node (oi) at (outer.west|-0I_in2) {};
	\node (oo) at (0I_out1-|outer.east) {};
	\draw[shorten <=-4pt] (oi.center) -- (0I_in2);
	\draw (Pid_out1) -- (0I_in1);
	\draw[shorten >=-4pt] (0I_out1) -- (oo.center);
	\draw[label]
		node[left=2pt of oi] {$A$}
		node[above=2pt] at ($(Pid_out1)!.5!(0I_in1)$) {$P$}
		node[right=2pt of oo] {$B$}
	;
\end{tikzpicture}
=
\begin{tikzpicture}[oriented WD, inner xsep=5pt, baseline=(current bounding box.center)]
	\node[bb={1}{1}] (Pf) {$(\,P,I\,)$};
	\node[ibb={1}{1}, bbx=1cm, fit=(Pf)] (outer2) {};
	\draw (outer2_in1) -- (Pf_in1);
	\draw (Pf_out1) -- (outer2_out1);
	\draw[label]
		node[left=2pt of outer2_in1] {$A$}
		node[right=2pt of outer2_out1] {$B$}
	;
\end{tikzpicture}
}
\]

Since these morphisms are the same in $\para$, they correspond to the same
learning algorithm, by the functoriality of $L_{\ve,e}$, \cref{thm.main}. Looking at the right hand picture,
suppose given a training datum $(a,b)$. The $(\rr^0,I)$ block has trivial
parameter space, so updates on it do nothing; however, it is capable of sending
a request to the input $A$ and the $(P,1_P)$ block. The $(P,1_P)$ block then
performs the desired update. Again, the result of doing so must be the same, by
the main theorem.

This suggests how one should think of weight tying. The schematic idea,
represented in string diagrams, is as follows:
\[
  \resizebox{.35\textwidth}{!}{
\begin{tikzpicture}[oriented WD, bbx=1cm, bb port length=4pt, inner xsep=15pt]
	\node[bb={2}{1}] (I) {$(\,\rr^0,I\,)$};
	\node[bb={2}{1}, below=of I] (J) {$(\,\rr^0,J\,)$};
	\node[bb={0}{1}] at ($(I)!.5!(J)-(3,0)$) (P) {$(\,P,1_P\,)$};
	\node[ibb={0}{0}, bbx=.5cm, fit=(I) (J) (P)] (outer) {};
	\node (oi1) at (outer.west|-I_in1) {};
	\node (oi2) at (outer.west|-J_in2) {};
	\node (oo1) at (outer.east|-I_out1) {};
	\node (oo2) at (outer.east|-J_out1) {};
	\draw[shorten <=-4pt] (oi1.center) -- (I_in1);
	\draw[shorten <=-4pt] (oi2.center) -- (J_in2);
	\draw[shorten >=-4pt] (I_out1) -- (oo1.center);
	\draw[shorten >=-4pt] (J_out1) -- (oo2.center);
	\draw (P_out1) to (I_in2);
	\draw (P_out1) to (J_in1);
	\draw[label]
		node[left=2pt of oi1] {$A$}
		node[left=2pt of oi2] {$B$}
		node[right=2pt of oo1] {$C$}
		node[right=2pt of oo2] {$D$}
	;
\end{tikzpicture}
	}
\]

The comonoid structure from \cref{prop.bimonoids} tells us how the above network
will behave as a learning algorithm with respect to quadratic error. The
splitting wire will send the same parameter to both implementations $I$ and $J$,
and it will update itself based on the sum of the requests received from $I$ and
$J$.
\end{example}

\begin{remark} \label{rem.idag}
It has suited our purposes to simply consider the category $\nnet$ of neural
networks. That said, neural networks intuitively do have both monoidal and
bimonoid structure: we can place networks side by side to represent two networks
run in parallel, and we can add multiple inputs and duplicate outputs to each
node in a neural network as we like.

In fact, the category $\nnet$ can be generalised to a symmetric monoidal
category with bimonoids on each object. This generalisation is the strict
symmetric monoidal category $\idag$ of \emph{idags}---interfaced directed
acyclic graphs---which has been previously studied as an important structure in
concurrency, as well as for its elegant categorical properties \cite{FC}. 

It is also desirable that each functor $I^\sigma\colon \nnet \to \para$ implementing
neural networks as parametrised functions factors as $\nnet\to\idag\to\para$,
and indeed this can be done. Moreover, the factor $\idag \to\para$ is
symmetric monoidal and preserves bimonoid structures. 
\end{remark}

\section{Example: deep learning}\label{sec.examples}

In this section we explicitly compute an example of the functoriality of
implementing a neural network as a supervised learning algorithm. For this we
fix an activation function $\sigma$, as well as the quadratic error function, and a step size $\ve>0$. This respectively defines functors
$I^\sigma\colon \nnet \to \para$ and $L_{\ve,e}\colon \para \to \lrn$ by \cref{thm.main,prop.nnet_para}. In particular, we shall
see that $L_{\ve,e}$ implements the usual backpropagation algorithm with quadratic error
and step size $\ve$ on a neural network with activation function $\sigma$. To simplify notation, we'll write $I$ for $I^\sigma$.

Consider the following network, which has a single hidden layer:
\[
\begin{tikzpicture}[xscale=2,yscale=1.5]
\node[draw, circle, minimum size=12pt, inner sep=1pt] (2) at (0,0) {};
\node[draw, circle, minimum size=12pt, inner sep=1pt] (1) at (0,1) {};
\node[draw, circle, minimum size=12pt, inner sep=1pt] (y) at (1,0) {};
\node[draw, circle, minimum size=12pt, inner sep=1pt] (x) at (1,1) {};
\node[draw, circle, minimum size=12pt, inner sep=1pt] (o) at (2,.5) {};
\draw (1) to (x);
\draw (1) to (y);
\draw (2) to (x);
\draw (x) to (o);
\draw (y) to (o);
\end{tikzpicture}
\]
Call this network $A$; it is a morphism $A\colon 2 \to 1$ in the category
$\nnet$ of neural networks. The image of $A$ under the functor $I\colon\nnet\to\para$
is the parametrised function $
I_A\colon (\rr^5\times \rr^3) \times \rr^2 \longrightarrow \rr$
defined by
\[
  I_A(p,q,a) =
\sigma\big(q_1\sigma(p_{11}a_1+p_{12}a_2+p_{1b})+q_2\sigma(p_{21}a_1+p_{2b})+q_b\big).
\]
Here the parameter space is $\rr^5 \times \rr^3$, since there is a weight for
each of the three edges in the first layer, a bias for each of the two nodes in
the intermediate column, a weight for each of the two edges in the second, and a bias
for the output node. The input space is $\rr^2$, since there are two neurons on
the leftmost side of the network, and the output space is $\rr$, since there is
a single neuron on the rightmost side. 

We write the entries of the parameter space $\rr^5 \times \rr^3$ as $p_{11}$,
$p_{12}$, $p_{21}$, $p_{1b}$, $p_{2b}$, $q_1$, $q_2$, and $q_b$, where $p_{ji}$
represents the weight on the edge from the $i$th node of the first column to the
$j$th node of the second column, $p_{jb}$ represents the bias at the $j$th node
of the second column, $q_j$ represents the weight on the edge from the $j$th
node of the second column to the unique node of the final column, and $q_b$
represents the bias at the output node.

Suppose we wish to train this network. A training method is given by the functor
$L_{\ve,e}$, which turns this parametrised function $I_A$ into a supervised learning
algorithm. In particular, given a training datum
pair $(a,c)$ in $\rr^2 \times \rr$, we wish to obtain a map $\rr^5 \times \rr^3
\to \rr^5 \times \rr^3$ that updates the value of $(p,q)$. As we have chosen to
define $L_{\ve,e}$ by using
gradient descent with respect to the quadratic error function and an $\ve$ step
size, this map is precisely the update map given by the $L_{\ve,e}$-image of $I_A$ in $\lrn$.
That is, this parametrised function maps to the learning algorithm
$(\rr^5 \times \rr^3, I_A, U_A,r_A)$, where
\[
U_A\colon (\rr^5\times \rr^3) \times \rr^2 \times \rr \longrightarrow \rr^5
\times \rr^3
\]
is defined by
\begin{align*}
U_A(p,q,a,c) &= (\begin{smallmatrix}p \\ q\end{smallmatrix})- \ve
\nabla_{p,q} \tfrac12\lVert I_A(p,q,a) - c\rVert^2 \\
& = \begin{pmatrix} p_{11} -\ve
(I_A(p,q,a)-c)\dot{\sigma}(\gamma)q_1\dot{\sigma}(\beta_1)a_1
\\
	p_{12} -\ve
(I_A(p,q,a)-c)\dot{\sigma}(\gamma)q_1\dot{\sigma}(\beta_1)a_2
\\
p_{21} -\ve
(I_A(p,q,a)-c)\dot{\sigma}(\gamma)q_2\dot{\sigma}(\beta_2)a_1
\\
p_{1b} -\ve(I_A(p,q,a)-c)\dot{\sigma}(\gamma)q_1\dot{\sigma}(\beta_1)
\\
p_{2b} -\ve(I_A(p,q,a)-c)\dot{\sigma}(\gamma)q_1\dot{\sigma}(\beta_2)
\\
q_{1} -\ve(I_A(p,q,a)-c)\dot{\sigma}(\gamma)\sigma(\beta_1)
\\
q_{2} -\ve(I_A(p,q,a)-c)\dot{\sigma}(\gamma)\sigma(\beta_2)
\\
q_b -\ve(I_A(p,q,a)-c)\dot{\sigma}(\gamma)
\end{pmatrix},
\end{align*}
and
$
r_A\colon (\rr^5\times \rr^3) \times \rr^2 \times \rr \longrightarrow \rr^2$
is defined by
\begin{align*}
r_A(p,q,&a,c) = a - \nabla_{a} \tfrac12\lVert I_A(p,q,a) - c\rVert^2\\
=&
\begin{pmatrix} a_{1} -\ve
(I_A(p,q,a)-c)\dot{\sigma}(\gamma)(q_1\dot{\sigma}(\beta_1)p_{11}+q_2\dot{\sigma}(\beta_2)p_{21})
\\
a_{2} -\ve
(I_A(p,q,a)-c)\dot{\sigma}(\gamma)q_1\dot{\sigma}(\beta_1)p_{12}
\end{pmatrix}
\end{align*}
where $\gamma$ is such that $I_A(p,q,a) = \sigma(\gamma)$, where $\beta_1 =
p_{11}a_1+p_{12}a_2+p_{1b}$, where $\beta_2 = p_{21}a_1+p_{2b}$, and where
$\dot{\sigma}$ is the derivative of the activation function $\sigma$.
(Explicitly, $\gamma
=q_1\sigma(p_{11}a_1+p_{12}a_2+p_{1b})+q_2\sigma(p_{21}a_1+p_{2b})+q_b$.)
Note that $U_A$ executes gradient descent as claimed.

The above expression for $U_A$ is complex. It, however, reuses computations like
$\gamma$, $\beta_1$, and $\beta_2$ repeatedly. To simplify computation, we might
try to factor it. A factorisation can be obtained from the neural net itself.
Note that the above net may be written as the composite of two layers. The first
layer $B\colon 2 \to 2$
\[
\begin{tikzpicture}[xscale=2,yscale=1.5]
\node[draw, circle, inner sep=4pt] (2) at (0,0) {};
\node[draw, circle, inner sep=4pt] (1) at (0,1) {};
\node[draw, circle, inner sep=4pt] (y) at (1,0) {};
\node[draw, circle, inner sep=4pt] (x) at (1,1) {};
\draw (1) to (x);
\draw (1) to (y);
\draw (2) to (x);
\end{tikzpicture}
\]
maps to the parametrised function
\begin{align*}
I_B\colon \rr^5 \times \rr^2 &\longrightarrow \rr^2; \\
(p,a) &\longmapsto \begin{pmatrix} \sigma(p_{11}a_1+p_{12}a_2+p_{1b}) \\
\sigma(p_{21}a_1+p_{2b})\end{pmatrix}
\end{align*}
which in turn has update and request functions
\begin{align*}
U_B\colon \rr^5 \times \rr^2 \times \rr^2 &\longrightarrow \rr^5;\\
(p,a,b) &\longmapsto 
\begin{pmatrix} 
p_{11}-\ve(I_B(p,a)_1-b_1)\dot{\sigma}(\beta_1)a_1
\\
p_{12}-\ve(I_B(p,a)_1-b_1)\dot{\sigma}(\beta_1)a_2
\\
p_{21}-\ve(I_B(p,a)_2-b_2)\dot{\sigma}(\beta_2)a_1
\\
p_{1b}-\ve(I_B(p,a)_2-b_2)\dot{\sigma}(\beta_1)
\\
p_{2b}-\ve(I_B(p,a)_2-b_2)\dot{\sigma}(\beta_2)
\end{pmatrix}
\end{align*}
and
$
r_B\colon \rr^5 \times \rr^2 \times \rr^2 \longrightarrow \rr^2;
$
where
\begin{align*}
&r_B(p,a,b) \\
&=\begin{pmatrix} 
a_{1}-(I_B(p,a)_1-b_1)\dot{\sigma}(\beta_1)p_{11}+(I_B(p,a)_2-b_2)\dot{\sigma}(\beta_2)p_{21})
\\
a_{2}-(\sigma(I_B(p,a)_1-b_1)\dot{\sigma}(\beta_1)p_{12}
\end{pmatrix}
\end{align*}

The second layer $C\colon 2 \to 1$ 
\[
\begin{tikzpicture}[xscale=2,yscale=1.5]
\node[draw, circle, inner sep=4pt] (y) at (1,0) {};
\node[draw, circle, inner sep=4pt] (x) at (1,1) {};
\node[draw, circle, inner sep=4pt] (o) at (2,.5) {};
\draw (x) to (o);
\draw (y) to (o);
\end{tikzpicture}
\]
represents the parametrised function
\begin{align*}
I_C\colon \rr^3 \times \rr^2 &\longrightarrow \rr; \\
(q,b) &\longmapsto \sigma(q_1b_1+q_2b_2+q_b).
\end{align*}
which in turn has update and request functions
\begin{align*}
U_C\colon \rr^3 \times \rr^2 \times \rr &\longrightarrow \rr^2;\\
(q,b,c) &\longmapsto 
\begin{pmatrix} 
q_1-\ve(I_C(q,b)-c)\dot{\sigma}(q_1b_1+q_2b_2+q_b)b_1
\\
q_2-\ve(I_C(q,b)-c)\dot{\sigma}(q_1b_1+q_2b_2+q_b)b_2
\\
q_b-\ve(I_C(q,b)-c)\dot{\sigma}(q_1b_1+q_2b_2+q_b)
\end{pmatrix}
\end{align*}

\begin{align*}
r_C\colon \rr^3 \times \rr^2 \times \rr &\longrightarrow \rr^2;\\
(q,b,c) &\longmapsto 
\begin{pmatrix} 
b_1-(I_C(q,b)-c)\dot{\sigma}(q_1b_1+q_2b_2+q_b)q_1
\\
b_2-(I_C(q,b)-c)\dot{\sigma}(q_1b_1+q_2b_2+q_b)q_2
\end{pmatrix}
\end{align*}
Thus the layers map respectively to the learners $(\rr^5, I_B,U_B,r_B)$ and
$(\rr^3, I_C,U_C,r_C)$. 

Functoriality says that we may recover $U_A$ and $r_A$ as composites $U_A = U_B
\ast U_C$ and $r_A = r_B \ast r_C$. For example, we can check this is true for
the first coordinate $p_{11}$:
\begin{align*}
U_B\ast U_C(p,q,a,c)_{11} &=p_{11} - \ve(I(p,a)_1 - s(q,I(p,a),c)_1)\dot{\sigma}(\beta_1)a_1 \\
&=p_{11} - \ve(J(q,I(p,a))-c)\\
&\qquad\dot{\sigma}(q_1I_1(p,a)+q_2I_2(p,a)+q_b)q_1\dot{\sigma}(\beta_1)a_1 \\
&=U_A(p,q,a,c)_{11}
\end{align*}
In particular, the functoriality describes how to factor the expressions for
the entries of $U_A$ and $r_A$ in a way that allows us to parallelise the
computation and to efficiently reuse expressions.

\section{Discussion}\label{sec.discuss}

To summarise, in this paper we have developed an algebraic framework to describe
composition of supervised learning algorithms. In order to do this, we have
identified the notion of a request function as the key distinguishing feature of
compositional learning. This request function allows us to construct training
data for all sub-parts of a composite learning algorithm from training data for
just the input and output of the composite algorithm.

This perspective allows us to carefully articulate the structure of the
backpropogation algorithm. In particular, we see that:
\begin{itemize}
\item An activation function $\sigma$ defines a functor from neural network
architectures to parametrised functions.
\item A step size $\ve$ and an error function $e$ define a
functor from parametrised functions to supervised learning algorithms. 
\item The update function for the learning algorithm defined by this functor is specified by gradient
descent.
\item The request function for the learning algorithm defined by this functor is
specified by backpropagation.
\item Bimonoid structure in the category of learning algorithms allows us to
understand neural nets, including variants such as convolutional ones, as
generated from three basic algorithms.
  \item Neural networks provide a simple, compositional language for specifying
    learning algorithms.
  \item Composition of learners, along with the
    fact that gradients are quicker to compute for lower dimensional
    spaces, expresses the speed up in learning provided by
    backpropagation.
\end{itemize}

We close with some remarks on further directions.

\subsection{More general error functions} \label{subsec.general_error}

To apply our main theorem, and hence understand backpropagation as a functor, we
require certain derivatives of our chosen error function to be invertible.
Some commonly used error functions, however, do not quite obey these
conditions. For example, \emph{cross entropy} is an error function that is similar to
quadratic error, but often leads to faster convergence. Cross entropy is given by
\[
  e(x,y) = y\ln x+(1-y)\ln(1-x).
\]
This does not supply an example of the main theorem, as the derivative is not
defined when $x =0,1$. 

It is, however, quite close to an example. There are two ways in which the
practical method differs from our theory. First, instead of using simply summing
the error to arrive at our total error $E_I$, the usual method of using cross
entropy takes the average, giving the function
\[
E_I(p,a,b) = \frac1n\sum_{j=1}^n e(I_j(p,a),b_j)
\]
where $n$ is the dimension of the codomain vector space $B$. This is quite
straightforward to model, and we show how to do this by incorporating an extra
variable $\alpha$ in our generalisation of the main theorem in Appendix A of the extended version \cite{FST}.

The second is more subtle. When $x \ne 0,1$, cross entropy has the derivative
\[
\tfrac{\partial e}{\partial x}(z,y) = \frac{y-z}{z(1-z)}.
\]
This is invertible for all $z \ne 0,1$. In practice, we consider (i) training
data $(a,b)$ such that $0 \le a_i,b_j \le 1$ for all $i,j$, as well as (ii)
$I(p,a)$ such that this implies $0< I_k(p,a)< 1$ for all $k$, assuming we start
with a suitable initial parameter $p$ and small enough step size $\ve$. In this
case $\tfrac{\partial e}{\partial x}(z,-)$ is invertible at all relevant points,
and so we can define request functions.

Indeed, in this case the request function is
\[
r_I(p,a,b)_i = a_i - \tfrac{\lvert A\rvert}{\lvert B\rvert} a_i(1-a_i) \sum_j
\frac{I_j(p,a) -b_j}{I_j(p,a)(1-I_j(p,a))}\tfrac{\partial I_j}{\partial
a_i}(p,a),
\]
while the update function is the standard update rule for gradient descent with
respect to the cross entropy.
\[
U_I(p,a,b)_k = p_k - \ve \sum_j\frac{I_j(p,a) -b_j}{I_j(p,a)(1-I_j(p,a))}
\tfrac{\partial I_j}{\partial p_k}(p,a).
\]

There is work to be done in generalising the main theorem to
accommodate error functions such as cross entropy that fail to have derivatives
at isolated points. Regardless, note that while in this case it is not
straightforward to state backpropagation as a functor from $\para$, our
analysis nevertheless still sheds light on the compositional nature of the
learning algorithm.

\subsection{Generalised networked learning algorithms} 
The category $\lrn$ contains many more morphisms than those in the images of
$\para$ under the gradient descent/backpropagation functors $L_{\ve,e}$. Indeed,
$\lrn$ does not require us to define our update and request functions using
derivatives at all. This shows that we can introduce much more general elements
than the usual neural nets into machine learning algorithms, and still use a
modular, backpropagation-like method to learn.

What might more general learning algorithms look like? As the input/output
spaces need not be Euclidean, we could choose parts of our algorithm to learn
functions that are constrained to obey certain symmetries, such as periodicity,
or equivalently being defined on a torus. Indeed, learning over
manifolds equipped with some differentiable structure is an active field of
study \cite{BBLSV}.  We might also learn nonlinear functions like rotations, or
find a way to parametrise over network architectures. 

There is a clear advantage of using gradient descent: it gives a
heuristic argument that the learning algorithm updates towards
reducing some function, which we might interpret as the error. This helps
guide the construction of a neural net. Note, however, that the category $\lrn$
sees none of this structure; it lies in the functors $L_{\ve,e}$. Thus
$\lrn$ lets us construct learning algorithms that vary the notion of error 
across the network.

Finally, neural networks are useful because they provide a simple, combinatorial
language for specifying supervised learning algorithms. In \cref{sec.bimonoids},
we saw that this fact can be cast in categorical terms as follows: neural
networks are useful as they are the language generated, using the grammar of
symmetric monoidal categories, from just a few learners (scalar multiplication,
bias, activation functor, monoid multiplication, and comultiplication). Choosing
other primitives could provide a new, similarly simple language for specifying
learning algorithms tailored to a chosen application.

\subsection{A bicategory of learners}
At present, approaches to tuning \emph{hyperparameters} of a neural network are rather
ad hoc. One such hyperparameter is the architecture of the network itself. How many
layers does the optimal neural net for a given problem have, and how many nodes
should be in each layer?

A bicategory is a generalisation of a category in which there also exist
two-dimensional morphisms connecting the usual morphisms. Learners naturally form
a bicategory. Indeed, our definition of equivalence of learners implicitly uses this structure; equivalence is just isomorphism for the following notion of 2-morphism.

\begin{definition}
  A 2-morphism $f\colon (P,I,U,r) \to
(Q,J,V,s)$ of learners is a function $f\colon P \to Q$ such that 
$J(f(p),a)= I(p,a)$, $V(f(p),a,b) = f(U(p,a,b))$, and $s(f(p),a,b) =
r(p,a,b)$.
\end{definition}

Similarly, $\para$ and $\idag$ are also naturally bicategories. Working in this bicategorical setting gives language for relating different
parametrised functions and neural network architectures. Such higher morphisms
can encode ideas such as structured expansion of networks, by adding additional
neurons or layers. 

\subsection{Learners, lenses, and open games}

We defined the category of learners to model the exchange of information between individual learning units, and how this creates a larger, composite learner. Similarly, category theory has been
used to abstractly model bidirectional programming languages and databases,
using various notions of \emph{lens}, and interacting microeconomic
games, resulting in the notion of an \emph{open game}.

These categorical analyses reveal striking structural similarities
between these three subjects, unified through the idea that at core, they study
how agents exchange and respond to information. Indeed, asymmetric lenses are
simply learners with trivial state spaces, and learners themselves are open
games obeying a certain singleton best response condition. Writing
$\mathsf{Lens}$ and $\mathsf{Game}$ for the respective categories (defined in
\cite{JR} and \cite{GHWZ}), this gives embeddings
\[ 
  \mathsf{Lens} \hookrightarrow \lrn \hookrightarrow \mathsf{Game}.
\]
Via these functors, the implementation function corresponds to the get function
of a lens and the play function of an open game, while the request function
corresponds to the put and coplay functions. The update function of the learner
corresponds to the strategy update function, known as the best response
function, for the open game \cite{FJ,Hed}. Moreover, the category $\lrn$ also embeds into a
certain category of symmetric lenses \cite{FJ}.

Lenses come with various notions of `well behavedness',
which place compatibility conditions between put and get functions. So far, in
the case of learners, we have placed no requirements that an algorithm converge
towards a function $f$ when given enough training pairs $(a,f(a))$. Examining
the lens-learner relationship may shed insight onto how not only to define
structures that learn, but that learn well.

\newpage

\clearpage 

\newpage

\appendix
\subsection{Proof of \cref{prop.learners}}\label{app.proofcat}
\begin{proof}[Proof of \cref{prop.learners}]
This follows from routine checking of the axioms; we say a few words about each
case. Note that the arguments below are independent of choice of
representive of equivalence class of learner.

\paragraph{Identities}
The identity axioms are easily checked. For example, to check identity on
the left we see that $(P,I,U,r) \ast (\rr^0,\id,!,\pi_2)$ is given by $P\times \rr^0
\cong P$,\: $I(p,\id(a)) = I(p,a)$,\: $(U\ast !)(p,a,b) = U(p,a,\pi_2(I(p,a),b)) =
U(p,a,b)$,\: and $(r\ast \pi_2)(p,a,b) = r(p,a,\pi_2(I(p,a),b)) = r(p,a,b)$.

\paragraph{Associativity}
The associativity axiom is what requires that our morphisms in $\lrn$
be \emph{equivalence classes} of learners, and not simply learners themselves:
composition of learners is not associative on the nose. Indeed, this is because
products of sets are not associative on the nose: we only have isomorphisms $(P\times Q)
\times N \cong P \times (Q \times N)$ of sets, not equality. Acknowledging this,
associativity is straightforward to prove. 

Let $(P,I,U,r)\colon A \to B$,\: $(Q,J,V,s)\colon B \to C$, and $(N,K,W,t)\colon
C \to D$ be learners. The most involved item to check is the associativity of
the paired update--request function.  Computation shows
\begin{align*}
&\quad (U \ast V) \ast W \\
&= \Big(U\big(p,a,s(q,I(p,a),\gamma)\big),
V\big(q,I(p,a),\gamma\big),W\big(n,J(q,I(p,a)),d\big)\Big) \\
&= U \ast (V \ast W)
\end{align*}
where $\gamma = t\big(n,J(q,I(p,a)),d\big)$.

This equality is easier to parse using string diagrams. The composite
$(U\ast V) \ast W$ is given by the diagram
\[
\resizebox{.5\textwidth}{!}{
\begin{tikzpicture}[oriented WD]
	\node[bb port sep=1, bb={2}{1}]                            (I)     {$I$};
	\node[bb port sep=1, bb={2}{1}, above right=-1 and .5 of I](J)     {$J$};
	\node[bb port sep=2, bb={3}{2}, above right=-4 and 1 of J] (W)     {$W,t$};
	\node[bb port sep=1, bb={2}{1}, below right=2 and 1 of W](I2)     {$I$};
	\node[bb port sep=2, bb={3}{2}, above right=-4 and 1 of I2] (V)     {$V,s$};
	\node[bb port sep=2, bb={3}{2}, below right=-0.5 and 1 of V] (U)   {$U,r$};
	\node[ibb={5}{4}, fit=(I) (J) (W) (V) (U)]                          (outer) {};
	\node at ($(outer_in1'|-W_in1)-(0.3,0)$) {\footnotesize $N$};
	\node at ($(outer_in2')-(0.3,0)$) {\footnotesize $Q$};
	\node at ($(outer_in3')-(0.3,0)$) {\footnotesize $P$};
	\node at ($(outer_in4')-(0.3,0)$) {\footnotesize $A$};
	\node at ($(outer_in5')-(0.3,0)$) {\footnotesize $D$};
	\node at ($(outer_out1'|-W_out1)+(0.3,0)$) {\footnotesize $N$};
	\node at ($(outer_out2'|-V_out1)+(0.3,0)$) {\footnotesize $Q$};
	\node at ($(outer_out3'|-U_out1)+(0.3,0)$) {\footnotesize $P$};
	\node at ($(outer_out4'|-U_out2)+(0.3,0)$) {\footnotesize $A$};
	\draw (outer_in1|-W_in1) to (W_in1);
	\draw let \p1=(I.south west), \p2=($(J_in1)$), \n1=\bbportlen in
		(outer_in2) -- (outer_in2') to (\x1+\n1, \y2) -- (J_in1);
	\draw let \p1=(I.south west), \p2=($(V_in1)$), \n1=\bbportlen in
		(outer_in2') to (\x1-\n1, \y2) -- (V_in1);
	\draw (outer_in3) -- ($(\bbportlen,0)+(outer_in3')$) to (I_in1);
	\draw let \p1=(W.south east), \p2=($(outer_in3')$), \p3=(I2_in1),
	        \p4=(U_in1), \p5=(I.south west), \n1=\bbportlen in
		(\x2+\n1,\y2) to (\x5,.5*\y3+.5*\y4) -- (\x1-\n1, .5*\y3+.5*\y4) to (I2_in1);
	\draw let \p1=(W.south east), \p2=($(outer_in3')$), \p3=(I2_in1), \p4=(U_in1), \n1=\bbportlen in
		(\x1-\n1, .5*\y3+.5*\y4) to (\x3-\n1, \y4) -- (U_in1);
	\draw (outer_in4') -- ($(\bbportlen,0)+(outer_in4')$) to (I_in2);
	\draw let \p1=(W.south east), \p2=($(outer_in4')$), \n1=\bbportlen in
		(outer_in4) -- (\x1-\n1, \y2) to (I2_in2);
	\draw let \p1=(W.south east), \p2=($(outer_in4')$), \p3=(I2.south
	west), \p4=(U_in2), \n1=\bbportlen in
		(outer_in4') -- (\x1-\n1, \y2) to (\x3-\n1, \y4) -- (U_in2);
	\draw let \p1=(J.south east), \p2=($(outer_in5)$), \n1=\bbportlen in
		(outer_in5) -- (\x1+\n1, \y2) to (W_in3);
	\draw (I_out1) to (J_in2);
	\draw (J_out1) to (W_in2);
	\draw (W_out1) to (outer_out1|-W_out1);
	\draw let \p1=(I2.north west), \p2=($(V_in3)$), \n1=\bbportlen in
		(W_out2) to (\x1-\n1, \y2) -- (V_in3);
	\draw (I2_out1) to (V_in2);
	\draw (V_out1) to (outer_out2|-V_out1);
	\draw (V_out2) to (U_in3);
	\draw (U_out1) to (outer_out3|-U_out1);
	\draw (U_out2) to (outer_out4|-U_out2);
\end{tikzpicture}
}
\]
while the composite $U\ast (V\ast W)$ is given by
\[
\resizebox{.5\textwidth}{!}{
\begin{tikzpicture}[oriented WD]
	\node[bb port sep=1, bb={2}{1}]                            (I)     {$I$};
	\node[bb port sep=1, bb={2}{1}, above right=-1 and .5 of I](J)     {$J$};
	\node[bb port sep=2, bb={3}{2}, above right=-4 and 1 of J] (W)     {$W,t$};
	\node[bb port sep=2, bb={3}{2}, below right=-0.5 and 1 of W] (V)     {$V,s$};
	\node[bb port sep=2, bb={3}{2}, below right=-0.5 and 1 of V] (U)   {$U,r$};
	\node[ibb={5}{4}, fit=(I) (J) (W) (V) (U)]                          (outer) {};
	\node at ($(outer_in1'|-W_in1)-(0.3,0)$) {\footnotesize $N$};
	\node at ($(outer_in2')-(0.3,0)$) {\footnotesize $Q$};
	\node at ($(outer_in3')-(0.3,0)$) {\footnotesize $P$};
	\node at ($(outer_in4')-(0.3,0)$) {\footnotesize $A$};
	\node at ($(outer_in5')-(0.3,0)$) {\footnotesize $D$};
	\node at ($(outer_out1'|-W_out1)+(0.3,0)$) {\footnotesize $N$};
	\node at ($(outer_out2'|-V_out1)+(0.3,0)$) {\footnotesize $Q$};
	\node at ($(outer_out3'|-U_out1)+(0.3,0)$) {\footnotesize $P$};
	\node at ($(outer_out4'|-U_out2)+(0.3,0)$) {\footnotesize $A$};
	\draw (outer_in1|-W_in1) to (W_in1);
	\draw let \p1=(I.south west), \p2=($(J_in1)$), \n1=\bbportlen in
		(outer_in2) -- (outer_in2') to (\x1+\n1, \y2) -- (J_in1);
	\draw let \p1=(I.south west), \p2=($(V_in1)$), \n1=\bbportlen in
		(outer_in2') to (\x1-\n1, \y2) -- (V_in1);
	\draw (outer_in3) -- (outer_in3') to (I_in1);
	\draw let \p1=(I.south west), \p2=($(U_in1)$), \n1=\bbportlen in
		(outer_in3') to (\x1-\n1, \y2) -- (U_in1);
	\draw (outer_in4) -- (outer_in4') to (I_in2);
	\draw let \p1=(I.south west), \p2=($(U_in2)$), \n1=\bbportlen in
		(outer_in4') to (\x1-\n1, \y2) -- (U_in2);
	\draw let \p1=(J.south east), \p2=($(outer_in5)$), \n1=\bbportlen in
		(outer_in5) -- (\x1+\n1, \y2) to (W_in3);
	\draw (I_out1) to (J_in2);
	\draw let \p1=(J.south west), \p2=($(V_in2)$), \n1=\bbportlen in
		(I_out1) to (\x1-\n1, \y2) -- (V_in2);
	\draw (J_out1) to (W_in2);
	\draw (W_out1) to (outer_out1|-W_out1);
	\draw (W_out2) to (V_in3);
	\draw (V_out1) to (outer_out2|-V_out1);
	\draw (V_out2) to (U_in3);
	\draw (U_out1) to (outer_out3|-U_out1);
	\draw (U_out2) to (outer_out4|-U_out2);
\end{tikzpicture}
}
\]
To prove these two diagrams represent the same function, observe that the
function $(I(p,a),I(p,a))\colon P \times A \to B \times B$ can be drawn in the
following two ways:
\begin{multline*}
\begin{aligned}
\begin{tikzpicture}[oriented WD]
	\node[bb port sep=1, bb={2}{1}]                            (I)     {$I$};
	\node[bb port sep=1, bb={2}{1}, below= of I]              (I2)     {$I$};
	\node[ibb={2}{2}, fit=(I) (I2)]                          (outer) {};
	\node at ($(outer_in1')-(0.3,0)$) {\footnotesize $P$};
	\node at ($(outer_in2')-(0.3,0)$) {\footnotesize $A$};
	\node at ($(outer_out1')+(0.3,0)$|-I_out1) {\footnotesize $B$};
	\node at ($(outer_out2')+(0.3,0)$|-I2_out1) {\footnotesize $B$};
	\draw (outer_in1) -- (outer_in1') to (I_in1);
	\draw (outer_in2) -- (outer_in2') to (I_in2);
	\draw (outer_in1') to (I2_in1);
	\draw (outer_in2') to (I2_in2);
	\draw (I_out1') to (outer_out1|-I_out1);
	\draw (I2_out1') to (outer_out2|-I2_out1);
\end{tikzpicture}
\end{aligned}
\\
=
\begin{aligned}
\begin{tikzpicture}[oriented WD]
	\node[bb port sep=1, bb={2}{1}]                            (I)     {$I$};
	\node[ibb={2}{2}, fit=(I)]                          (outer) {};
	\node at ($(outer_in1'|-I_in1)-(0.3,0)$) {\footnotesize $P$};
	\node at ($(outer_in2'|-I_in2)-(0.3,0)$) {\footnotesize $A$};
	\node at ($(outer_out1')+(0.3,0)$) {\footnotesize $B$};
	\node at ($(outer_out2')+(0.3,0)$) {\footnotesize $B$};
	\draw (outer_in1|-I_in1) -- (I_in1);
	\draw (outer_in2|-I_in2) -- (I_in2);
	\draw (I_out1) -- ($(I_out1)+(.2,0)$) to (outer_out1);
	\draw ($(I_out1)+(.2,0)$) to (outer_out2);
\end{tikzpicture}
\end{aligned}
\end{multline*}
This equality, and the associativity of the diagonal map, implies the equality
of the previous two diagrams, and hence the associativity of the update and
request composites.

\paragraph{Monoidality} 
It is straightforward to check the above is a monoidal
product, with unit given by the one-element set $\{*\}$.

Indeed, note that we have now shown that $\lrn$ is a category.  There exists a
functor from the category $\set$ of sets and functions to $\lrn$.  This functor
maps each set to itself, and each function $f \colon A \to B$ to the trivially
parametrised function $\overline f \colon \rr^0 \times A \to B$.  Note that
$(\set, \times)$ is a monoidal category, and let $\alpha$, $\rho$, and $\lambda$
respectively denote the associator, right unitor, and left unitor for
$(\set,\times)$. The images of these maps under this trivial parametrisation
functor $\overline{(\cdot)}$, written $\overline \alpha$, $\overline \rho$, and
$\overline \lambda$, are the corresponding structure maps for $(\lrn, \,\prl\,)$
as a symmetric monoidal category. 

The naturality of these maps, as well as the axioms of a symmetric monoidal
category, then follow in a straightforward way from the corresponding facts in
$\set$.
\bigskip

Thus we have defined a symmetric monoidal category.
\end{proof}

\subsection{Proof of \cref{thm.main}}\label{sec.proof_main}
We prove a slightly more general theorem, incorporating an extra variable
$\alpha$ so as to better describe the case of cross entropy as discussed in 
\cref{subsec.general_error}.
\begin{theorem}[Generalisation of \cref{thm.main}]
Fix $\ve>0$, $\alpha\colon \nn \to \rr_{>0}$, and $e(x,y)\colon \rr \times \rr \to
\rr$ differentiable such that $\frac{\partial e}{\partial x}(z,-)\colon \rr \to
\rr$ is invertible for each $z \in \rr$. 

Then we can define a faithful, injective-on-objects, symmetric monoidal functor
\[
L\colon\para \longrightarrow \lrn
\]
that sends each parametrised function $I\colon P \times A \to B$ to the learner
$(P,I,U_I,r_I)$ defined by
\[
  U_I(p,a,b) \coloneqq p - \varepsilon\nabla_pE_I(p,a,b)
\]
and
\[
  r_I(p,a,b) \coloneqq f_a\bigg(\frac{1}{\alpha_B}\nabla_a E_I(p,a,b)\bigg),
\]
where $f_a$ is component-wise application of the inverse to $\frac{\partial
e}{\partial x}(a_i,-)$ for each $i$, and
\[
E_I(p,a,b) \coloneqq \alpha_B \sum_j e(I_j(p,a),b_j).
\]
\end{theorem}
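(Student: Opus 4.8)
The plan is to check, in order, that $L$ is well defined on objects and on $\para$-equivalence classes of morphisms, that it preserves composition and identities, and finally that it is injective on objects, faithful, and symmetric monoidal — with essentially all the real content sitting in the composition law, where the chain rule does the work. Injectivity on objects is immediate, as $L$ fixes each $\rr^n$. That $L$ even lands in $\lrn$ uses the hypothesis: invertibility of $\tfrac{\partial e}{\partial x}(z,-)$ is exactly what makes the component-wise inverses $f_a$ exist, so that $r_I$ is defined. For well-definedness on equivalence classes, a differentiable bijection $f\colon P\to P'$ with $I'(f(p),a)=I(p,a)$ gives $E_{I'}(f(p),-,-)=E_I(p,-,-)$, from which the request functions agree under $f$ directly and, via the chain rule, $\nabla_{p'}E_{I'}$ pulls back along $f$ to $\nabla_p E_I$, yielding compatibility of the update functions; so $L$ passes to equivalence classes (routine; the extended version spells it out).

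The core is functoriality of composition. Fix $(P,I)\colon A\to B$ and $(Q,J)\colon B\to C$, put $b=I(p,a)$, and compare $L(I\ast J)$ with $L(I)\ast L(J)$ using the composition rules in $\para$ and $\lrn$. The pivotal identity is $E_{I\ast J}(p,q,a,c)=E_J\big(q,I(p,a),c\big)$: differentiating and applying the chain rule, $\nabla_{(p,q)}E_{I\ast J}$ has $Q$-part equal to $\nabla_q E_J(q,b,c)$ — which is exactly the second component of $(U_I\ast U_J)$ — and $P$-part of the shape $\sum_j \tfrac{\partial E_J}{\partial b_j}(q,b,c)\,\nabla_p I_j(p,a)$, with the same expansion for $\nabla_a$. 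The $\lrn$ composition feeds the first learner the corrected datum $b'=s_J(q,b,c)=r_J(q,b,c)$; since $b'_j$ is obtained by applying the inverse of $\tfrac{\partial e}{\partial x}(b_j,-)$, one has $\tfrac{\partial e}{\partial x}(b_j,b'_j)$ equal, up to the appropriate $\alpha$ normalisation, to $\tfrac{\partial E_J}{\partial b_j}(q,b,c)$. Substituting this into $\nabla_p E_I(p,a,b')$ and $\nabla_a E_I(p,a,b')$ reproduces precisely the $P$-part and $A$-part just computed, giving $U_{I\ast J}=U_I\ast U_J$ and $r_{I\ast J}=r_I\ast r_J$. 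Identities are a one-line check: for $I=\mathrm{id}$ the update is trivial (no parameters), and $f_a$ inverts $\tfrac{\partial e}{\partial x}(a_i,-)$ exactly, so $r_{\mathrm{id}}(p,a,b)=b=\pi_2(p,a,b)$, which is the identity learner.

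Faithfulness and monoidality are the remaining bookkeeping. Faithfulness holds because the implementation $I$ is a component of $L(P,I)$, so any bijection of parameter spaces intertwining two image learners also intertwines their implementations and hence witnesses equivalence already in $\para$ — this is exactly why the bijection (not surjection) notion of equivalence was chosen. For symmetric monoidality, $L$ is strict on objects ($\rr^{n+m}=\rr^n\times\rr^m$), and on morphisms $L(I)\prl L(J)=(P\times Q,\,I\prl J,\,U_I\prl U_J,\,r_I\prl r_J)$ is identified with $L(I\prl J)$ because $\nabla_{(p,q)}$ and $\nabla_{(a,c)}$ of $E_{I\prl J}$ decouple into the $I$-block and the $J$-block; compatibility with the braiding is the analogous check using that the swap is parameter-free. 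I expect the main obstacle to be purely organisational: keeping gradient-in-$p$ versus gradient-in-$a$, the chain-rule substitution of $b'$, and the $\alpha$ factors aligned so that the inverse $f_a$ cancels exactly the right quantity — the string-diagram presentation of the $\lrn$ composition rule is what makes this manageable, and the complete calculation is deferred to the extended version.
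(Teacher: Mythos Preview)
Your approach is essentially the paper's: the same pivotal identity $E_{I\ast J}(p,q,a,c)=E_J\big(q,I(p,a),c\big)$, the same chain-rule computation splitting $\nabla_{(p,q)}E_{I\ast J}$ into its $Q$- and $P$-parts, the same cancellation of $f_b$ against $\tfrac{\partial e}{\partial x}(b_j,-)$ to recover $\nabla_b E_J$ inside $\nabla_p E_I(p,a,b')$, and the same direct checks for identities and for monoidal decoupling via $E_{I\prl J}=E_I+E_J$.

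One point where you go beyond the paper: you sketch well-definedness on $\para$-equivalence classes, which the paper's proof omits entirely. Your argument there has a gap, however. The chain rule gives $\nabla_p E_I(p,-,-)=Df(p)^{T}\,\nabla_{p'}E_{I'}(f(p),-,-)$, but equivalence of learners demands $f\big(p-\ve\nabla_p E_I\big)=f(p)-\ve\nabla_{p'}E_{I'}$, and for a general nonlinear differentiable bijection $f$ these are not the same statement --- gradient descent does not commute with nonlinear reparametrisation. (Take $P=P'=\rr$, $f(p)=p^{3}$, $I(p,a)=p^{3}a$, $I'(p',a)=p'a$, and compare the two sides.) This is a shared subtlety: the paper simply does not address it, and strictly speaking the statement as written needs either a more restrictive notion of equivalence in $\para$ (e.g.\ affine bijections) or an additional argument. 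Your ``routine; the extended version spells it out'' is therefore too optimistic.
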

\begin{proof}

The functor $L$ is by definition injective-on-objects. Since $I$ maps to $(P,I,U_I,r_I)$,
the functor $L$ is injective on morphisms, and hence will give a faithful functor.

Let $I\colon P \times \rr^n \to \rr^m$ and $J\colon Q \times \rr^m \to \rr^\ell$
be parametrised functions. We show that the composite of their images is equal
to the image of their composite.

\paragraph{Update functions}
By definition the composite of the update functions of $I$ and $J$ is given by
\begin{align*}
&\quad (U_I\ast U_J)(p,q,a,c) \\
&= \Big(U_I(p,a,r_J(q,I(p,a),c)),\; U_J(q,I(p,a),c)\Big)\\
&= \Big(p - \ve\nabla_p E_I(p,a,r_J(q,I(p,a),c)),\; q-\ve\nabla_q
E_J(q,I(p,a),c)\big),
\end{align*}
while the update function of the composite $I \ast J$ is
\begin{align*}
U_{I \ast J}(p,q,a,c) = \Big(p - \ve\nabla_p E_{I \ast J}(p,q,a,c),\; q -
\ve\nabla_qE_{I \ast J}(p,q,a,c)\Big).
\end{align*}
To show that these are equal, we thus must show that the following equations hold
\begin{align}\label{eqn:p_entry}
	\nabla_p E_I(p,a,r_J(q,I(p,a),c)) &= \nabla_p E_{I \ast J}(p,q,a,c)\\\label{eqn:q_entry}
	\nabla_q E_J(q,I(p,a),c) &= \nabla_qE_{I \ast J}(p,q,a,c).
\end{align}

We first consider Equation \ref{eqn:p_entry}:
\begin{align*}
  & \quad \nabla_p E_I(p,a,r_J(q,I(p,a),c))
\\&= 
  \nabla_p \alpha_B \sum_i e\left(I_i(p,a),r_J(q,I(p,a),c)_i\right)
\tag{def $E_I$}\\&= 
  \bigg(\alpha_B \sum_i \frac{\partial e}{\partial
  x}\left(I_i(p,a),r_J(q,I(p,a),c)_i\right)\frac{\partial I_i}{\partial
  p_\ell}(p,a)\bigg)_\ell
\tag{def $\nabla_p$}\\&=
	\bigg(\alpha_B \sum_i \frac{\partial e}{\partial
  x}\bigg(I_i(p,a),f_{I(p,a)}\Big(\frac{1}{\alpha_B}\nabla_bE_J(q,I(p,a),c)\Big)_i\bigg)\frac{\partial
  I_i}{\partial p_\ell}(p,a)\bigg)_\ell
\tag{def $r_J$}\\&= 
  \bigg(\alpha_B \sum_i \frac1{\alpha_B}\left(\nabla_bE_J(q,I(p,a),c)\right)_i\frac{\partial
  I_i}{\partial p_\ell}(p,a)\bigg)_\ell
\tag{def $f$}\\&= 
  \bigg(\alpha_B \sum_i \frac1{\alpha_B}\big(\nabla_b
  \alpha_C \sum_j e(J_j(q,I(p,a)),c_j)\big)_i
  \frac{\partial
  I_i}{\partial p_\ell}(p,a)\bigg)_\ell
\tag{def $E_J$}\\&= 
  \bigg(\sum_i \alpha_C \sum_j\frac{\partial e}{\partial
  x}(J_j(q,I(p,a)),c_j)\frac{\partial J_j}{\partial b_i}(q,I(p,a))\frac{\partial
  I_i}{\partial p_\ell}(p,a)\bigg)_\ell
\tag{def $\nabla_b$}\\&= 
  \bigg(\alpha_C \sum_j\frac{\partial e}{\partial
  x}(J_j(q,I(p,a)),c_j)\frac{\partial J_j}{\partial p_\ell}(q,I(p,a))\bigg)_\ell
\tag{chain rule}\\&= 
  \nabla_p\alpha_C \sum_j e(J_j(q,I(p,a)),c_j)
\tag{def $\nabla_p$}\\&= 
  \nabla_p E_{I \ast J}(p,q,a,c)
\tag{def $E_{J\ast I}$}
\end{align*}
note the shift to coordinate-wise reasoning, that $f$ is defined as the inverse
to $\partial e$, and the use of the chain rule. Here, $i$, $j$, and $\ell$ are indexing over the dimensions of $B$, $C$, and $P$ respectively.

Equation \ref{eqn:q_entry} simply follows from the definition of the error
function; we need not even take derivatives:
\[
E_J(q,I(p,a),c) = \alpha \sum_i e\big(J(q,I(p,a))_i,c_i\big) =
E_{I \ast J}(p,q,a,c)_m 
\]
Thus we have shown $U_I\ast U_J = U_{I \ast J}$, as desired.

\paragraph{Request functions}
We must prove that the following equation holds:
\[
r_I(p,a,r_J(q,I(p,a),c)) = r_{I \ast J}(p,q,a,c)
\]
This follows due to the chain rule, in the exact same manner as for updating
$p$, but swapping the roles of $a$ and $p$ in the proof of Equation \ref{eqn:p_entry}:
\begin{align*}
\qquad r_I(p,a,r_J(q,I(p,a),c))
&= f_a\bigg(\frac{1}{\alpha_B}\nabla_a E_I(p,a,r_J(q,I(p,a),c))\bigg) \\
&= f_a\bigg(\frac{1}{\alpha_B}\nabla_a E_{I \ast J}(p,q,a,c)\bigg) \\
&= r_{I \ast J}(p,q,a,c)
\end{align*}

\paragraph{Identities} The identity on the object $A$ in the category of
parametrised functions is the projection $\id_A\colon \rr^0 \times A \to A$. The image of
$\id_A$ has trivial update function, since the parameter space is trivial. The
request function is given by
\[
r_{\id_A}(0, a, b) = f_a(\tfrac1{\alpha_B} \nabla_a (\alpha_B\sum_ie(a_i,b_i)))
= \bigg(f_a(\frac{\partial e}{\partial a_i}(a_i,b_i))\bigg)_i = b.
\]
This is exactly the identity map $(\rr^0,\id_A,!,\pi_2)$ in $\lrn$.

\paragraph{Monoidal structure} The functor $L$ is a monoidal functor. That is, the
learner given by the monoidal product of parametrised functions is equal to the
monoidal product of the learners given by those same functions, up to the
standard isomorphisms $\rr^n \times \rr^m \cong \rr^{n+m}$. To see that this
is true, suppose we have parametrised functions $I\colon P \times A \to B$ and
$J\colon Q \times C \to D$. Their tensor is $I \prl J \colon (P \times Q)
\times (A \times C) \to B \times D$. Note that $E_{I\; \prl\; J}(p,q,a,c,b,d)
= E_I(p,a,b) + E_J(q,c,d)$. Thus the update function of their tensor is given by
\begin{align*}
&\quad U_{I\; \prl\; J}(p,q,a,c,b,d) \\
&= \big(p - \ve\nabla_pE_{I\; \prl\; J}(p,q,a,c,b,d)),\; q - \ve\nabla_qE_{I\; \prl\; J}(p,q,a,c,b,d)\big)\\
&= \big(p - \ve\nabla_pE_{I}(p,a,b)),\; q - \ve\nabla_qE_{J}(q,c,d)\big)\\
&= \big(U_I(p,a,b),\; U_J(q,c,d)\big)
\end{align*}
and similarly the request function is
\begin{align*}
&\quad r_{I \;\prl\; J}(p,q,a,c,b,d) \\
&= f_{(a,c)}\big(\tfrac1{\alpha_{B \times D}}\nabla_{(a,c)}E_{I\;\prl\;
J}(p,q,a,c,b,d)\big) \\
&= f_{(a,c)}\bigg(\tfrac1{\alpha_{B \times
D}}\nabla_{(a,c)}\alpha_{B\times D} \Big(\sum_i e(I(p,a)_i,b_i)+\sum_j
e(J(q,c)_j,d_j)\Big)\bigg) \\
&= \Big(f_{a}\big(\tfrac1{\alpha_B}\nabla_a E_I(p,a,b)\big),\; 
f_{c}\big(\tfrac1{\alpha_D}\nabla_c E_J(q,c,d)\big)\Big) \\
&= \Big(r_I(p,a,b),\; r_J(q,c,d)\Big)
\end{align*}
Thus image of the tensor is the tensor of the image.
\end{proof}

\subsection{Background on category theory}\label{app.cats}

\subsubsection{Symmetric monoidal categories}
A symmetric monoidal category is a setting for composition for
network-style diagrammatic languages like neural networks. A prop is a
particularly simple sort of strict symmetric monoidal category.

First, let us define a category. We specify a \define{category}
$\mathsf C$  using the
data: 
\begin{itemize}
\item a collection $X$ whose elements are called \emph{objects}.
\item for every pair $(A,B)$ of objects, a set $[A,B]$ whose elements are called
\emph{morphisms}.
\item for every triple $(A,B,C)$ of objects, a function $[A,B] \times [B,C] \to
[A,C]$ call the \emph{a composition rule}, and where we write $(f,g) \mapsto
f;g$.
\end{itemize}
This data is subject to the axioms
\begin{itemize}
\item identity: for all objects $A$ there exists $\id_A \in [A,A]$ such that for
all $f\in [A,B]$ and $g \in [B,A]$ we have $\id_A;f= f$ and $g;\id_A =g$.
\item associativity: for all $f \in [A,B]$, $g \in [B,C]$ and $h \in [C,D]$ we
have $(f;g);h = f;(g;h)$.
\end{itemize}

The main object of our interest, however, is a particular type of category,
known as a symmetric monoidal category. For a \define{symmetric
monoidal category} $\mathsf C$, we further require the data:
\begin{itemize}
\item for every pair $(A,B)$ of objects, another object $A \otimes B$ in $X$.
\item for every quadruple $(A,B,C,D)$ of objects a function $[A,B] \times [C,D]
\to [A\otimes C, B \otimes D]$ called the \emph{monoidal product}.
\end{itemize}

Using this data, we may draw networks. We think of the objects as being various
types of wire, and a morphism $f$ in $[A_1\otimes \dots \otimes A_n,B_1 \otimes
\dots \otimes B_m]$ as a box with wires of types $A_i$ on the
left and wires of types $B_i$ on the right. Here are some pictures.
\[
\begin{tikzpicture}[oriented WD]
	\node[symbb port sep=1.4, symbb={3}{3}]                            (I)     {$f$};
	\node[ibb={3}{3}, fit=(I)]                          (outer) {};
	\node at ($(outer_in1'|-I_in1)-(0.4,0)$) {\footnotesize $A_1$};
	\node at ($(outer_in2'|-I_in2)-(0.4,-0.2)$) {\footnotesize $\vdots$};
	\node at ($(outer_in3'|-I_in3)-(0.4,0)$) {\footnotesize $A_n$};
	\node at ($(outer_out1'|-I_out1)+(0.4,0)$) {\footnotesize $B_1$};
	\node at ($(outer_out2'|-I_out2)+(0.4,0.2)$) {\footnotesize $\vdots$};
	\node at ($(outer_out3'|-I_out3)+(0.4,0)$) {\footnotesize $B_m$};
	\node at ($(I_in2)-(0.5,-0.2)$) {\footnotesize $\vdots$};
	\node at ($(I_out2)+(0.5,0.2)$) {\footnotesize $\vdots$};
	\draw (outer_in1|-I_in1) to (I_in1);
	\draw (outer_in3|-I_in3) to (I_in3);
	\draw (I_out1') to (outer_out1|-I_out1);
	\draw (I_out3') to (outer_out3|-I_out3);
\end{tikzpicture}
\]

By connecting wires of the same type, we can draw more complicated pictures. For
example:
\[
  \resizebox{.45\textwidth}{!}{
\begin{tikzpicture}[oriented WD]
	\node[symbb port sep=1, bb={2}{2}]                           (F)    {$f$};
	\node[symbb port sep=2, bb={2}{3}, below right=-2 and 1 of F](G)   {$g$};
	\node[bb port sep=2, bb={2}{1}, below right=-0.5 and 1 of G](H)	{$h$};
	\node[bb port sep=2, bb={3}{2}, above right=-0.5 and 1.2 of G](K){$k$};
	\node[ibb={5}{4}, fit=(F) (K) (G) (H)]                          (outer) {};
	\node at ($(outer_in1'|-K_in1)-(0.3,0)$) {\footnotesize $A$};
	\node at ($(outer_in2'|-F_in1)-(0.3,0)$) {\footnotesize $B$};
	\node at ($(outer_in3'|-F_in2)-(0.3,0)$) {\footnotesize $C$};
	\node at ($(outer_in4'|-G_in2)-(0.3,0)$) {\footnotesize $C$};
	\node at ($(outer_in5'|-H_in2)-(0.3,0)$) {\footnotesize $D$};
	\node at ($(outer_out1'|-K_out2)+(0.3,0)$) {\footnotesize $A$};
	\node at ($(outer_out2'|-K_out1)+(0.3,0)$) {\footnotesize $A$};
	\node at ($(outer_out3'|-G_out2)+(0.3,0)$) {\footnotesize $E$};
	\node at ($(outer_out4'|-H_out1)+(0.3,0)$) {\footnotesize $F$};
	\draw (outer_in1|-K_in1) to (K_in1);
	\draw (outer_in2|-F_in1) to (F_in1);
	\draw (outer_in3|-F_in2) to (F_in2);
	\draw (outer_in4|-G_in2) to (G_in2);
	\draw (outer_in5) -- ($(outer_in5)+(3.5,0)$) to (H_in1);
	\draw (F_out1) to (K_in2);
	\draw (F_out2) to (G_in1|-F_out2);
	\draw (G_out1) to (K_in3);
	\draw (G_out3) to (H_in2);
	\draw (G_out2) to (outer_out3|-G_out2);
	\draw (H_out1) to (outer_out4|-H_out1);
	\draw (K_out1) to (outer_out2|-K_out2);
	\draw (K_out2) to (outer_out1|-K_out1);
\end{tikzpicture}
}
\]
The key point of a network is that any such picture must have an unambiguous
interpretation as a morphism. The use of string diagrams to represent morphisms
in a monoidal category is formalised in \cite{JS}.

To form what is known as a \define{strict} symmetric monoidal category, the above data must obey
additional axioms that ensure it captures the above intuition of behaving like a
network. These axioms are
\begin{itemize}
\item interchange: for all $f \in [A,B]$, $g \in [B,C]$, $h \in [D,E]$, $k \in
[E,F]$ we have $(f;g)\otimes (h;k) = (f \otimes h);(g \otimes k)$.
\item monoidal identity: there exists an object $I$ such that $I \otimes A = A =
A \otimes I$.
\item monoidal associativity: for all objects $A,B,C$ we have $(A \otimes
B)\otimes C= A \otimes (B\otimes C)$.
\item symmetry: for all pairs of objects $A,B$ we have morphisms
$\sigma_{A,B} \in [A\otimes B,B \otimes A]$ such
that $\sigma_{A,B}; \sigma_{B,A} = \id_{A \otimes B}$, and that for all $f \in
[A,C]$, $g \in [B,D]$ we have $(f\otimes g);\sigma_{C \otimes D} = \sigma_{A
\otimes B};(f\otimes g)$. 
\end{itemize}

More generally, symmetric monoidal categories require these axioms only to be
true up to natural isomorphism. More detail can be found in \cite{Mac}.

\begin{example}
An example of a symmetric monoidal category is $(\set, \times)$, where
our objects are a set of each cardinality, and morphisms are functions between
them.  The monoidal product is given by the cartesian product of sets.
\end{example}
\begin{example}
Another example of a symmetric monoidal category is $(\fvect,\oplus)$,
where our objects are finite-dimensional vector spaces, morphisms are linear
maps, and the monoidal product is given by the direct sum of vector spaces.
\end{example}

\subsubsection{Functors}
A functor is a way of reinterpreting one category in another, preserving the
algebraic structure. In other words, a functor is the notion of
structure preserving map for categories, in analogy with linear transformations
as the structure preserving maps for vector spaces, and group homomorphisms as
the structure preserving maps for groups.

Formally, given categories $\mathsf C, \mathsf D$, a \define{functor} $F\colon
\mathsf{C} \to \mathsf{D}$ sends every object $A$ of $\mathsf C$ to an object
$FA$ of $\mathsf D$, every morphism $f\colon A \to B$ in $\mathsf C$ to a
morphism $Ff\colon FA \to FB$ in $\mathsf D$, such that $F1=1$ and $Ff;Fg =
F(f;g)$. 

A functor between symmetric monoidal categories is a \define{symmetric
monoidal functor} if $FI_{\mathsf C}=I_{\mathsf D}$, where $I$ is the monoidal
unit for the relevant category, and if there exist
isomorphisms $F(A \otimes B) \cong FA \otimes FB$ natural in objects $A,B$ of
$\mathsf C$. We say that the functor is a \define{strict} symmetric monoidal
functor if these isomorphisms are in fact equalities.

We also say that a functor is \define{faithful} if $Ff=Fg$ only when $f=g$, and
\define{injective-on-objects} if the map from objects of $\mathsf C$ to objects
of $\mathsf D$ is injective. 

\subsubsection{Bimonoids}
A \define{bimonoid} in a symmetric monoidal category is an object $A$ together
with morphisms that obey certain axioms. These morphisms have names and types:
\[
  \begin{tikzcd}[row sep=1pt]
  \textrm{multiplication}&
  \textrm{unit}
  \\\\
\begin{tikzpicture}[oriented WD]
	\node[blankbb={2}{1}]                          (outer) {};
	\draw (outer_in1) to (outer_out1);
	\draw (outer_in2) to (outer_out1);
\end{tikzpicture}
&
\begin{tikzpicture}[oriented WD]
	\node[blankbb={0}{1}]                          (outer) {};
	\node[draw, fill, circle, inner sep=1pt] (u) at ($(outer_out1)-(.45,0)$) {};
	\draw (u) to (outer_out1);
\end{tikzpicture}
\\
\mu\colon A\otimes A \to A &
\epsilon\colon I \to A
\\\\\\\\
  \textrm{comultiplication}&
  \textrm{counit} 
  \\\\
\begin{tikzpicture}[oriented WD]
	\node[blankbb={1}{2}]                          (outer) {};
	\draw (outer_in1) to (outer_out1);
	\draw (outer_in1) to (outer_out2);
\end{tikzpicture}
&
\begin{tikzpicture}[oriented WD]
	\node[blankbb={1}{0}]                          (outer) {};
	\node[draw, fill, circle, inner sep=1pt] (u) at ($(outer_in1)+(.45,0)$) {};
	\draw (outer_in1) to (u);
\end{tikzpicture}
\\
\delta\colon A \to A \otimes A &
\nu\colon A \to I
\end{tikzcd}
\]
Note that these diagrams are informal, but useful, special depictions of
these morphisms. More formally, for example, the diagram 
\[
\begin{tikzpicture}[oriented WD]
	\node[blankbb={2}{1}]                          (outer) {};
	\draw (outer_in1) to (outer_out1);
	\draw (outer_in2) to (outer_out1);
\end{tikzpicture}
\]
for the multiplication $\mu$ is a shorthand for the string diagram
\[
\begin{tikzpicture}[oriented WD, bb port sep=1pt]
	\node[bb={2}{1}]                          (m) {$\mu$};
	\node[blankbb={2}{1}, fit = (m)]                          (outer) {};
	\draw (outer_in1) to (m_in1);
	\draw (outer_in2) to (m_in2);
	\draw (m_out1) to (outer_out1);
\end{tikzpicture}
\]

These morphisms must obey the axioms:
\[
\begin{aligned}
\begin{tikzpicture}[oriented WD]
	\coordinate (u) {};
	\node[blankbb={2}{1}, fit=(u)]                          (outer) {};
	\node[draw, fill, circle, inner sep=1pt] (u) at ($(outer_in2)+(.5,0)$) {};
	\draw (outer_in1) -- ($(outer_in1)+(.5,0)$) to ($(outer_out1)-(.3,0)$)
	-- (outer_out1);
	\draw (u) to ($(outer_out1)-(.3,0)$);
\end{tikzpicture}
\end{aligned}
=
\begin{aligned}
\begin{tikzpicture}[oriented WD]
	\coordinate (u) {};
	\node[blankbb={1}{1}, fit=(u)]                          (outer) {};
	\draw (outer_in1) to (outer_out1);
\end{tikzpicture}
\end{aligned}
\]
\[
\begin{aligned}
\begin{tikzpicture}[oriented WD]
	\coordinate (u) {};
	\coordinate (l) at ($(u)+(.5,-1.5)$) {};
	\coordinate (cent) at ($(u)+(0,-2)$) {};
	\node[blankbb={3}{1}, fit= (u) (cent)]                          (outer) {};
	\draw (outer_in1) to (u);
	\draw (outer_in2) to (u);
	\draw (outer_in3) -- ($(outer_in3)+(.8,0)$)to (l);
	\draw (u) to (l);
	\draw (l) to (outer_out1|-l);
\end{tikzpicture}
\end{aligned}
=
\begin{aligned}
\begin{tikzpicture}[oriented WD]
	\coordinate (l) {};
	\coordinate (u) at ($(l)+(.5,1.5)$) {};
	\coordinate (cent) at ($(l)+(0,2)$) {};
	\node[blankbb={3}{1}, fit= (cent) (l)]                          (outer) {};
	\draw (outer_in1) -- ($(outer_in1)+(.8,0)$) to (u);
	\draw (outer_in2) to (l);
	\draw (outer_in3) to (l);
	\draw (l) to (u);
	\draw (u) to (outer_out1|-u);
\end{tikzpicture}
\end{aligned}
\]
\[
\begin{aligned}
\begin{tikzpicture}[oriented WD]
	\coordinate (l) {};
	\coordinate (r) at ($(l)+(.15,0)$) {};
	\node[blankbb={2}{2}, fit= (r) (l)]                          (outer) {};
	\draw (outer_in1) to (l);
	\draw (outer_in2) to (l);
	\draw (l) to (r);
	\draw (r) to (outer_out1);
	\draw (r) to (outer_out2);
\end{tikzpicture}
\end{aligned}
=
\begin{aligned}
\begin{tikzpicture}[oriented WD]
	\coordinate (a) {};
	\coordinate (b) at ($(a)-(0,1.5)$) {};
	\coordinate (c) at ($(b)-(0,1.5)$) {};
	\coordinate (d) at ($(c)-(0,1.5)$) {};
	\node[blankbb={2}{2}, fit= (a) (b) (c) (d)]     (outer) {};
	\draw (outer_in1) to ($(a)-(.3,0)$)-- (a);
	\draw (outer_in1) to ($(b)-(.3,0)$)--($(b)-(.2,0)$);
	\draw (outer_in2) to ($(c)-(.3,0)$)--($(c)-(.2,0)$);
	\draw (outer_in2) to ($(d)-(.3,0)$)--(d);
	\draw (a) to (outer_out1);
	\draw ($(c)-(.2,0)$) to (outer_out1);
	\draw ($(b)-(.2,0)$) to (outer_out2);
	\draw (d) to (outer_out2);
\end{tikzpicture}
\end{aligned}
\]
\[
\begin{aligned}
\begin{tikzpicture}[oriented WD]
	\coordinate (c) at (0,0) {};
	\node[blankbb={1}{1}, fit=(c)]                          (outer) {};
	\node[draw, fill, circle, inner sep=1pt] (l) at ($(outer_in1)+(.7,0)$) {};
	\node[draw, fill, circle, inner sep=1pt] (r) at ($(outer_out1)-(.7,0)$) {};
	\draw (l) to (r);
\end{tikzpicture}
\end{aligned}
=
\begin{aligned}
\begin{tikzpicture}[oriented WD]
	\coordinate (c) at (0,0) {};
	\node[blankbb={1}{1}, fit=(c)]                          (outer) {};
\end{tikzpicture}
\end{aligned}
\]
\[
\begin{aligned}
\begin{tikzpicture}[oriented WD]
	\node[draw, fill, circle, inner sep=1pt] (c) {};
	\coordinate (cent) at ($(c)-(.3,0)$) {};
	\node[blankbb={2}{1}, fit=(cent)]                    (outer) {};
	\draw (outer_in1) to (cent) -- (c);
	\draw (outer_in2) to (cent);
\end{tikzpicture}
\end{aligned}
=
\begin{aligned}
\begin{tikzpicture}[oriented WD]
	\node[draw, fill, circle, inner sep=1pt] (c) {};
	\node[draw, fill, circle, inner sep=1pt] (d) at ($(c)-(0,2.2)$) {};
	\coordinate (cent) at ($(c)-(.3,1.1)$) {};
	\node[blankbb={2}{1}, fit=(cent)]                    (outer) {};
	\draw (outer_in1|-c) to (c);
	\draw (outer_in2|-d) to (d);
\end{tikzpicture}
\end{aligned}
\]
and their mirror images.

Note that the second axiom above, called associativity, implies that all maps with codomain
$1$ constructed using only products of the multiplication and the identity map
are equal. It is thus convenient, and does not cause confusion, to define the
following notation:
\[
\begin{aligned}
\begin{tikzpicture}[oriented WD]
	\coordinate (u) {};
	\coordinate (cent) at ($(u)+(.5,-5)$) {};
	\node[blankbb={5}{1}, fit= (u) (cent)]                          (outer) {};
	\coordinate (o) at ($(outer_out1)-(.5,0)$) {};
	\node at ($(outer_in4)+(.3,.5)$) {$\vdots$};
	\draw (outer_in1) to (o);
	\draw (outer_in2) to (o);
	\draw (outer_in3) to (o);
	\draw (outer_in5) to (o);
	\draw (o) to (outer_out1);
\end{tikzpicture}
\end{aligned}
:=
\begin{aligned}
\begin{tikzpicture}[oriented WD]
	\coordinate (u) {};
	\coordinate (l) at ($(u)+(.7,-1.5)$) {};
	\coordinate (m) at ($(l)+(.7,-1.8)$) {};
	\coordinate (cent) at ($(u)+(.5,-5)$) {};
	\node[blankbb={5}{1}, fit= (u) (cent)]                          (outer) {};
	\node at ($(outer_in4)+(.3,.5)$) {$\vdots$};
	\draw (outer_in1) to (u);
	\draw (outer_in2) to (u);
	\draw (u) to (l);
	\draw (outer_in3) -- ($(outer_in3)+(.8,0)$)to (l);
	\draw (l) to (m);
	\draw ($(outer_in5)$) -- ($(outer_in5)+(1.2,0)$)to (m);
	\draw (m) to (outer_out1|-m);
\end{tikzpicture}
\end{aligned}
\]
We define the mirror image notation for iterated comultiplications.

These morphisms, and the axioms they obey, allow network diagrams to be drawn.
First, the morphisms $\mu$, $\epsilon$, $\delta$, and $\nu$ respectively give
interpretations to pairwise merging, initializing, splitting in pairs, and deleting
edges. The associativity and coassociativity axioms, for example, then give
unique interpretation to $n$-ary merging and $n$-ary splitting, as described
above.

\begin{example}
Each object in $\fvect$ can be equipped with the structure of a
bimonoid. Indeed, given a vector spaces $V$, the multiplication $\mu\colon V
\oplus V \to V$ takes a pair $(u,v)$ to $u+v$, the unit $\epsilon\colon 0 \to V$
maps the unique element $0$ of the $0$-dimensional vector space to the zero
vector in $V$, the comultiplication $\delta\colon V \to V \oplus V$ maps $v$ to
$(v,v)$, and the counit $\nu\colon V \to 0$ maps every vector $v \in V$ to zero.
It is standard linear algebra to check that these maps obey the bimonoid axioms;
see \cite{BE,BSZ} for details.
\end{example}
\end{document}